\theoremstyle{plain}
\newtheorem{theorem}{Theorem}[section]
\newtheorem{lemma}[theorem]{Lemma}
\newtheorem{proposition}[theorem]{Proposition}
\newtheorem{corollary}[theorem]{Corollary}
\newtheorem{example}[theorem]{Example}
\theoremstyle{remark}
\newtheorem{definition}[theorem]{Definition}
\newtheorem{remark}[theorem]{Remark}
\newcommand{\crl}[1]{\ensuremath{ \left\{ #1 \right\} }}
\newcommand{\edg}[1]{\ensuremath{\! \left[ #1 \right] }}
\newcommand{\brak}[1]{\ensuremath{\left( #1 \right)}}
\newcommand{\abs}[1]{\ensuremath{ \left| #1 \right| }}
\newcommand{\be}{\begin{equation}}
\newcommand{\ee}{\end{equation}}
\newcommand{\bea}{\begin{eqnarray}}
\newcommand{\eea}{\end{eqnarray}}
\newcommand{\beas}{\begin{eqnarray*}}
\newcommand{\eeas}{\end{eqnarray*}}
\newcommand{\ind}{{\perp\!\!\!\perp}}
\newcommand{\Do}{\text{\rm do}}
\newcommand{\pa}{\text{\rm pa}}
\newcommand{\cL}{\mathcal{L}}
\newcommand{\cX}{\mathcal{X}}
\newcommand{\cY}{\mathcal{Y}}
\newcommand{\cV}{\mathcal{V}}
\newcommand{\cW}{\mathcal{W}}
\newcommand{\cZ}{\mathcal{Z}}
\newcommand{\cP}{\mathcal{P}}
\newcommand{\cF}{\mathcal{F}}
\newcommand{\cS}{\mathcal{S}}
\newcommand{\bN}{\mathbb{N}}
\newcommand{\bR}{\mathbb{R}}
\newcommand{\bE}{\mathbb{E}}
\newcommand{\bP}{\mathbb{P}}
\newcommand{\R}{\mathbb{R}}
\title{Optimal transport and Wasserstein\\ distances for causal models\thanks{We are grateful to Beatrice Acciaio, Julio Backhoff-Veraguas, Daniel Bartl, Mathias Beiglböck, Songyan Hou, Nicolai Meinshausen, Alexander Neitz, Gudmund Pammer and Drago Ple\v{c}ko for interesting discussions and helpful comments.}}
\author{
  Patrick Cheridito \\  
  Department of Mathematics\\
  ETH Zurich, Switzerland 
   \And
 Stephan Eckstein \\
 Department of Mathematics\\
  University of T\"ubingen, Germany
}
\begin{document}

\maketitle

\begin{abstract}
In this paper, we introduce a variant of optimal transport adapted to the causal 
structure given by an underlying directed graph $G$. Different graph structures lead to 
different specifications of the optimal transport problem. For instance, a fully connected 
graph yields standard optimal transport, a linear graph structure corresponds to 
causal optimal transport between the distributions of 
two discrete-time stochastic processes, and an empty graph leads to a notion 
of optimal transport related to CO-OT, Gromov--Wasserstein distances and factored OT. We derive 
different characterizations of $G$-causal transport plans and introduce 
Wasserstein distances between causal models that respect the underlying graph structure.
We show that average treatment effects are continuous with respect 
to $G$-causal Wasserstein distances and small perturbations of structural causal models 
lead to small deviations in $G$-causal Wasserstein distance. We also
introduce an interpolation between causal models based on 
$G$-causal Wasserstein distance and compare it to standard Wasserstein interpolation.\\[2mm]
{\bf Keywords:} Average treatment effect, causality, directed graphs, optimal transport,
Wasserstein distance
\end{abstract}

\section{Introduction}

Originally, optimal transport problems were introduced by Monge \citep{monge1781memoire} and, in a 
more general form, by Kantorovich \citep{kantorovich1942translocation} to study the most efficient
way to transport and allocate resources.
But in addition to this immediate application, optimal transport theory has also 
lead to the the notion of Wasserstein distance \citep{kantorovich1960mathematical, vallender1974calculation, vaserstein1969markov}, which defines a metric
on probability distributions.
Over the years, optimal transport has found applications in different areas of economics 
\citep{chiappori2010hedonic, galichon2018optimal, pflug20121}, 
probability theory \citep{Rachev1985, RachRuesch1998},
statistics \citep{fournier2015rate, ghosal2022multivariate, niles2022minimax}, 
differential geometry \citep{de2014monge,figalli2010mass,Sud76}, 
robust optimization \citep{blanchet2019quantifying,mohajerin2018data, ConOpt2017}, 
machine learning and data science \citep{arjovsky2017wasserstein,cuturi2014fast, peyre2019computational,IPM2012}, just to name a few.
At the same time, various variants and extensions of optimal transport have emerged,
like multi-marginal versions \citep{agueh2011barycenters, embrechts2013model, pass2015multi}, optimal transport with additional constraints \citep{beiglbock2016problem, CKT, CKPS, de2020minmax,korman2015optimal,nutz2022directional}, optimal transport between measures of different masses \citep{chizat2018unbalanced, unbalancedThibault}, relaxations \citep{bonneel2019spot, LiLin2021} 
and regularizations \citep{cuturi2013sinkhorn,lorenz2021quadratically}. On the other hand,
causal research examines causal relationships between different events; see e.g.
\citep{holland1986statistics, pearl2009causality, peters2016causal, scholkopf2022causality}.
In probabilistic theories of causality, the underlying causal structure is typically 
described by a directed graph, leading to graphical causal models \citep{bongers2021foundations}.

In this paper we introduce a version of optimal transport that is adapted to the 
causal structure given by an underlying directed graph and investigate 
corresponding Wasserstein distances between different causal models. 
Our main goal is to obtain a version of the optimal transport problem which yields topological and geometric properties consistent with the structure of the underlying directed graph.
We build on existing literature studying this question for distributions of stochastic processes such as
\citep{backhoff2019stability,backhoff2017causal,bartl2021wasserstein,lassalle2018causal}. This string of literature demonstrates different advantages of
incorporating the temporal structure into the definition of optimal transport distances between distributions of stochastic processes compared to using standard Wasserstein distances, which are agnostic to this structure. For instance, it improves the stability of dynamic optimization problems that are unstable under standard Wasserstein distances
(see \citep{backhoff2019stability}), and it leads to geometric structures compatible with certain properties of stochastic processes (e.g. it has been shown in \citep{bartl2021wasserstein} that sets of martingales are geodesically convex). In this paper, we show that it has similar benefits if a suitably adapted concept of optimal transport is used for general causal
structures. For such a version of optimal transport, the structure of the 
underlying graph $G$ determines the exact specification of the optimal transport problem.
A fully connected graph corresponds to standard optimal transport, which consists
in finding an optimal transport plan transforming a given distribution 
into another one. Missing edges translate into additional constraints, 
which increase the $G$-causal Wasserstein distance and therefore,
lead to a finer topology on the corresponding set of causal models. For instance,
a linear graph requires transport plans to respect the temporal structure. This results in 
causal transport problems between distributions of stochastic processes, which have been studied by e.g. \citep{backhoff2019stability,backhoff2017causal,bartl2021wasserstein,lassalle2018causal}.
An empty graph restricts transport plans the most and is related to CO-OT problems \citep{titouan2020co}, the Gromov--Wasserstein distance \citep{memoli2011gromov} and factored OT \citep{tran2021factored}.
For a general directed graph $G$, we call the corresponding transport plans $G$-causal and demonstrate their suitability for downstream tasks which depend on the information structure given by $G$. For instance, we show that while one of the most fundamental causal statistics, the average treatment effect, is not continuous in the standard Wasserstein metric, it becomes Lipschitz continuous if distances between distributions are measured with a $G$-causal Wasserstein distance resulting from $G$-causal optimal transport. Furthermore, we demonstrate that under suitable assumptions, $G$-causal Wasserstein interpolation between two $G$-compatible distributions preserves $G$-compatibility, which in general, is not true for standard Wasserstein interpolation.


The remainder of this paper is organized as follows: In Section \ref{sec:notation}
we introduce the setup and provide our definition of $G$-causal transport plans together 
with the corresponding optimal transport problems in Definitions \ref{def:causal} and \ref{def:causalot}. 
In Section \ref{subsec:specialcases} we showcase the relation to existing concepts of 
optimal transport from the literature. In Section \ref{sec:DAG} we 
focus on $G$-causal transport maps for directed acyclic graphs. We derive alternative 
characterizations of $G$-causal transport plans in Theorem \ref{thm:main} and 
Corollary \ref{cor:main} and study the structure of sets of 
$G$-causal transport plans in Proposition \ref{prop:basic_prop}. In Section \ref{sec:Wasserstein} 
we introduce Wasserstein distances between causal models that respect the structure of the 
underlying graph $G$. We show that they satisfy all properties of a metric except for 
the triangle inequality in Proposition \ref{prop:wassersteinprop}. In Section \ref{subsec:ate}, 
we prove that, under suitable assumptions, average treatment effects, which estimate the
causal effect of a treatment or intervention, are Lipschitz continuous with respect to 
$G$-causal Wasserstein distance under a change of the underlying probability model
while they are not continuous for standard Wasserstein distances. As a consequence, bounds on the error resulting from estimating average treatment effects from data can be obtained by controlling the $G$-causal Wasserstein estimation error.
On the other hand, we show in Section \ref{subsec:closeclose} that small perturbations of a 
structural causal model correspond to small deviations in $G$-causal Wasserstein 
distance. In particular, convergence of structural models implies convergence of the corresponding probability models with respect to G-causal Wasserstein distance. Finally, in Section \ref{subsec:geodesic} we study a Wasserstein interpolation between causal 
models that respects the causal structure and compare it to standard 
Wasserstein interpolation.

\section{Notation and definitions}
\label{sec:notation}

\subsection{Causal structures described by directed graphs}

We consider a finite set $V = \crl{1, \dots, n}$ 
for some $n \in \bN$, endowed with a causal structure
given by a set of {\sl directed edges} $E \subseteq \{(i,j) \in V^2 : i \neq j\}$. 
This turns $G = (V,E)$ into a {\sl directed graph}. For $j \in V$, we denote by 
$\pa_j$ the parents of $j$, given by 
\[
\pa_j := \{i \in V : (i,j) \in E\},
\]
and for a subset $A \subseteq V$, we define
\[
\pa_A := \bigcup_{j \in A} \pa_j.
\]
We order both sets $\pa_j$ and $\pa_A$ according to the natural ranking of the elements of
$V = \{1, \dots, n\}$. 

\subsection{Spaces and measures}

We consider two product spaces of the form $\cX = \cX_1\times \dots \times \cX_n$ and 
$\cY = \cY_1 \times \dots \times \cY_n$ for non-empty 
Polish spaces ${\cal X}_i, {\cal Y}_i$, $i = 1, \dots, n$.
We endow $\cX$, $\cY$ and $\cX \times \cY$ 
with the product topologies, which makes them again Polish; see e.g. 
\citep[Chapter IX, §6.1, Proposition I]{bourbakitopology}. By
$\cP(\cX)$, $\cP(\cY)$ and $\cP(\cX \times \cY)$ we denote the set of all Borel probability 
measures on $\cX$, $\cY$ and $\cX \times \cY$, respectively. 
If we write $X \sim \mu$ for a measure $\mu \in \mathcal{P}(\cX)$, we mean 
that $X$ is an $\cX$-valued random variable with distribution $\mu$ defined on 
some underlying probability space.
Moreover, we use the notation $X_{1:i}$ to denote the tuple $(X_1, \dots, X_i)$ for 
$i \in V = \{1, \dots, n\}$, and similarly, $X_A$ for any ordered subset $A$ of $V = \{1, \dots, n\}$.
$X_{1:0}$ as well as $X_{\pa_i}$ in cases where $\pa_i$ is empty, 
will be understood as empty tuple.

\subsection{Transport plans and $G$-causal couplings}

A transport map between two probability measures $\mu \in \cP(\cX)$ and $\nu \in \cP(\cY)$
in the sense of Monge \citep{monge1781memoire} is a measurable mapping 
$T \colon \cX \to \cY$ such that $T_{\#} \mu = \nu$, where $T_{\#} \mu$ 
denotes the push-forward $\mu \circ T^{-1}$ of $\mu$ along $T$.
A Monge transport map does not always exist, e.g. if $\mu$ is a Dirac measure and $\nu$ is not.
Therefore, Kantorovich \citep{kantorovich1942translocation} extended the set of transport plans by considering couplings 
between $\mu$ and $\nu$, which are probability measures $\pi \in \cP(\cX \times \cY)$ 
with marginals $\mu$ and $\nu$. The set $\Pi(\mu, \nu)$ of all couplings 
between $\mu$ and $\nu$ always contains the product measure $\mu \otimes \nu$, 
and therefore, is non-empty. Moreover, for every Monge transport map $T \colon \cX \to \cY$ 
between $\mu$ and $\nu$, the distribution of the mapping 
$(\mbox{id}, T) \colon \cX \to \cX \times \cY$ under $\mu$ is a coupling in $\Pi(\mu, \nu)$.
On the other hand, every coupling $\pi \in \Pi(\mu, \nu)$ can be realized as a {\sl randomized
transport map}. Indeed, since $\cY$ is Polish, $\pi$ admits a disintegration of the form
$\pi(dx, dy) = \mu(dx) \pi(dy \mid x)$, where 
$\pi(dy \mid x)$ is a regular version of the conditional 
distribution of $y \in \cY$ given $x \in \cX$ with respect to $\pi$ ; see e.g. 
\citep[Theorem 3.4]{kal3rd}. Therefore, one obtains from the kernel 
representation \citep[Lemma 4.22]{kal3rd} that
$\pi$ can be realized as the distribution of a pair of 
random variables $(X,Y)$ such that $X \sim \mu$ and $Y$ is of the form 
$Y = g(X,U)$ for a measurable map $g \colon \cX \times \R \to \cY$ and 
an $\R$-valued random variable $U$ defined on the same 
probability space as and independent of $X$.

To motivate our definition of $G$-causal couplings, we first consider the case of 
Monge transport maps $T \colon \cX = \cX_1 \times \dots \times \cX_n \to 
\cY = \cY_1 \times \dots \cY_n$. Intuitively, one might want to
define $G$-causality of $T$ by saying that $T$ is
$G$-causal if it is of the form $T= (T_1, \dots, T_n)$, where
\be \label{T1}
T_i(x) = g_i(x_i, x_{\pa_i}) \quad \mbox{for all } x \in \cX 
\mbox{ and } i = 1, \dots, n,
\ee
for measurable mappings $g_i \colon \cX_i \times \cX_{\pa_i} \to \cY_i$,
$i = 1, \dots, n$. More generally, one could call $T = (T_1, \dots, T_n)$ $G$-causal if 
 \be \label{T2}
T_i(x) = g_i(x_i, x_{\pa_i}, T_{\pa_i}(x)) \quad \mbox{for all } x \in \cX
\mbox{ and } i = 1, \dots, n,
\ee
for measurable mappings $g_i \colon \cX_i \times \cX_{\pa_i} \times \cY_{\pa_i} \to \cY_i$,
$i = 1, \dots, n$. It is easy to see that in the following three cases, conditions 
\eqref{T1} and \eqref{T2} are equivalent:

\begin{itemize}
	\item if the graph $G= (V,E)$ is fully connected, that is, 
	$E = \{(i,j) \in V^2 : i \neq j\}$,
	\item 
	if the graph $G= (V,E)$ is empty, that is, $E = \emptyset$, or
	\item
	if the graph $G= (V,E)$ has the linear structure $E = \{(i,j) \in V^2 : i < j\}$.
\end{itemize}
On the other hand, 
\begin{itemize}
	\item if the graph $G= (V,E)$ has the Markovian structure 
	$E = \{(i-1, i) : 2 \le i \le n\}$,
\end{itemize}
\eqref{T2} is slightly more flexible than \eqref{T1}.
In general, \eqref{T1} is too restrictive for our purposes. For instance, a generalization of \eqref{T1} to randomized transport maps would still not guarantee the existence of 
G-causal couplings between any two probability measures $\mu$ and $\nu$ that are compatible with respect to a 
directed acyclic graph G in the sense of Definition \ref{def:Gcomp} below. Therefore, we use \eqref{T2} to 
define $G$-causal Monge transport maps, which leads to the following general definition.

\begin{definition}[$G$-causal couplings] \label{def:causal} 
	Let $G = (V,E)$ be a directed graph. We call a coupling $\pi \in \Pi(\mu, \nu)$
	between two measures $\mu \in \mathcal{P}(\cX)$ and $\nu \in \mathcal{P}(\cY)$
	$G$-causal, if there exists a pair of random variables $(X, Y) \sim \pi$ together with 
	measurable mappings
	\[
	g_i \colon \cX_{i} \times \cX_{\pa_i} \times \cY_{\pa_i} \times \mathbb{R} \rightarrow \cY_i,
	\quad i = 1, \dots, n,
	\]
	and $\R$-valued random variables $U_1, \dots, U_n$ defined on the same 
	probability space as $(X,Y)$ such that $X,U_1, \dots, U_n$ are independent and
	\begin{equation}
	\label{condcausal} 
	Y_i = g_i(X_{i}, X_{\pa_i}, Y_{\pa_i}, U_i) \quad \mbox{for all } i = 1, \dots, n.
	\end{equation}
	We call $\pi$ $G$-bicausal, if $\pi$ and $\tilde\pi \sim (Y, X)$ are $G$-causal. 
	We denote the set of $G$-causal and $G$-bicausal couplings between $\mu$ and $\nu$ by $\Pi_{G}(\mu, \nu)$ and $\Pi^{\rm bc}_{G}(\mu, \nu)$, respectively.
\end{definition}

$\Pi^{\rm bc}_G(\mu, \nu)$ is a symmetrized version of $\Pi_G(\mu, \nu)$
consisting of transport plans that are $G$-causal in both directions. 

In Theorem \ref{thm:main} and Corollary \ref{cor:main} below we give alternative characterizations of $G$-causal and $G$-bicausal transport plans in the case where $G$ is a directed acyclic graph. Proposition \ref{prop:basic_prop} shows that $\Pi_{G}(\mu, \nu)$ and $\Pi_G^{\rm bc}(\mu, \nu)$ are non-empty for all probability measures $\mu$ and $\nu$ respecting the structure of G. 
The following transport problems can be 
defined for arbitrary directed graphs $G$.

\begin{definition}[$G$-causal optimal transport]
	\label{def:causalot}
	Let $G = (V,E)$ be a directed graph. For $\mu \in \cP(\cX)$, $\nu \in \cP(\cY)$ 
	and a measurable cost function $c : \cX \times \cY \rightarrow [0,\infty)$,
	we define the $G$-causal and $G$-bicausal optimal transport problems as
	\be \label{ot}
	\inf_{\pi \in \Pi_{G}(\mu, \nu)} \int_{\cX \times \cY} c \; d\pi
	\qquad \mbox{and} \qquad
	\inf_{\pi \in \Pi^{\rm bc}_{G}(\mu, \nu)} \int_{\cX \times \cY} c \; d\pi,
	\ee
	respectively.
\end{definition}

\subsection{Special cases and related concepts}\label{subsec:specialcases}

\subsubsection{Fully connected graphs and standard optimal transport}
\label{ss:full}

If $E$ equals the set $\{(i,j) \in V^2 : i \neq j\}$ of all possible edges, 
a coupling $\pi \in \Pi(\mu, \nu)$ between two measures 
$\mu \in \cP(\cX)$ and $\nu \in \cP(\cY)$ satisfies Definition \ref{def:causal}
if and only if there exists a pair of random variables 
$(X, Y) \sim \pi$ together with measurable mappings
\[
g_i \colon \cX \times \cY_{V \setminus \{i\}} \times \mathbb{R} \rightarrow \cY_i, \quad i = 1, \dots, n,
\]
and $\R$-valued random variables $U_1, \dots, U_n$ defined on the same 
probability space as $(X,Y)$ such that $X,U_1, \dots, U_n$ are independent and
\begin{equation}
\label{condfull}
Y_i = g_i(X, Y_{V \setminus \{i\}}, U_i) \quad \mbox{for all } i = 1, \dots, n.
\end{equation}
Since $\cX$ and $\cY_i$, $i = 1, \dots, n$, are Polish spaces,  
any coupling $\pi \in \Pi(\mu, \nu)$ admits an iterated disintegration of the form 
\[
\pi(dx, dy) = \mu(dx) \pi(dy_1 \mid x)
\pi(dy_2 \mid x, y_1) \cdots \pi(dy_n \mid x, y_{1:n-1})
\]
for regular conditional distributions $\pi(dy_1 \mid x)$ and $\pi(dy_i \mid x, y_{1:i-1})$,
$i = 2, \dots, n$; see e.g. \citep[Theorem 3.4]{kal3rd}. So, it follows from the kernel representation 
\citep[Lemma 4.22]{kal3rd} that $\pi$ has a realization of the form \eqref{condfull}, which shows 
that 
\[
\Pi_G(\mu, \nu) = \Pi^{\rm bc}_G(\mu, \nu) = \Pi(\mu, \nu).
\]
That is, the two optimal transport problems in \eqref{ot} reduce to standard optimal transport.

\subsubsection{Empty graphs, CO-OT and Gromov--Wasserstein distances}
\label{ss:empty}

If the graph has no edges, that is, $E = \emptyset$, then a coupling 
$\pi \in \Pi(\mu, \nu)$ satisfies Definition 
\ref{def:causal} if and only if there exists a pair of random variables $(X, Y) \sim \pi$ together with 
measurable mappings
\[
g_i \colon \cX_i \times \mathbb{R} \rightarrow \cY_i, \quad i = 1, \dots, n,
\]
and $\R$-valued random variables $U_1, \dots, U_n$ defined on the same 
probability space as $(X,Y)$ such that $X,U_1, \dots, U_n$ are independent and
\be \label{condempty}
Y_i = g_i(X_i, U_i) \quad 
\mbox{for all } i = 1, \dots, n.
\ee
Since $\cX_i, \cY_i$, $i=1, \dots, n$, are Polish spaces, every 
measure $\pi_i \in \Pi(\mu_i, \nu_i)$ can be disintegrated as
\[
\pi_i(dx_i, dy_i) = \mu_i(dx_i) \pi_i(dy_i \mid x_i)
\]
for a regular conditional distribution 
$\pi_i(dy_i \mid x_i)$; see e.g. \citep[Theorem 3.4]{kal3rd}. Therefore, we obtain from the 
kernel representation \citep[Lemma 4.22]{kal3rd} that $\pi_i$ has a 
representation of the form \eqref{condempty}. In particular, if 
$\mu = \mu_1 \otimes \cdots \otimes \mu_n$ and
$\nu = \nu_1 \otimes \cdots \otimes \nu_n$ are product measures, one has
\[
\Pi_G(\mu, \nu) = \Pi^{\rm bc}_G(\mu, \nu) = 
\crl{\pi_1 \otimes \cdots \otimes \pi_n : 
	\pi_i \in \Pi(\mu_i, \nu_i) \mbox{ for all } i = 1, \dots, n}.
\]
This set of couplings is related to CO-OT problems \citep{titouan2020co},
the Gromov--Wasserstein distance \citep{memoli2011gromov}
and factored OT \citep{tran2021factored}.
CO-OT problems also optimize over couplings of marginals of product form.
They aim to simultaneously optimize two transport plans between 
features and labels of two datasets. Therefore,
$n=2$ is used. The Gromov--Wasserstein distance 
measures the discrepancy between two spaces. It considers $n = 2$ and marginal 
measures of the special form $\mu = \mu_1 \otimes \mu_1$, $\nu = \nu_1 \otimes \nu_1$.
Factored OT is a generalization of CO-OT to a multi-marginal setting based on couplings of the form
\[
\{\pi_1 \otimes \dots \otimes \pi_n : 
\pi_i \in \Pi \big(\mu_{i,1}, \dots, \mu_{i,n_i} \big) \text{ for } i=1, \dots, n\}.
\]

\subsubsection{Linear graph structure and transport between 
distributions of stochastic processes}
\label{ss:linear}

If the graph has the linear structure $E = \{(i,j) \in V^2 : i < j\}$, 
then a coupling $\pi \in \Pi(\mu, \nu)$ satisfies Definition \ref{def:causal}
if and only if there exists a pair of random variables 
$(X, Y) \sim \pi$ together with measurable mappings
\[
g_1 \colon \cX_1 \times \mathbb{R} \rightarrow \cY_1, \quad
g_i \colon \cX_{1:i} \times \cY_{1:i-1} \times \mathbb{R} \rightarrow \cY_i, \quad i = 2, \dots, n,
\]
and $\R$-valued random variables $U_1, \dots, U_n$ defined on the same 
probability space as $(X,Y)$ 
such that $X,U_1, \dots, U_n$ are independent and
\[
Y_1 = g_1(X_1, U_1), \quad
Y_i = g_i(X_{1:i}, Y_{1:i-1}, U_i), \quad 
 i = 2, \dots, n.
\]
If $\mu$ and $\nu$ are the distributions of two discrete-time stochastic processes with 
indices $i = 1, \dots, n$, the coupling $\pi$ can 
be viewed as a transport plan that respects the temporal structure. Such couplings, sometimes treated in more generality through the use of filtrations,
have been studied, for instance, by \citep{bartl2021wasserstein,lassalle2018causal,pflug2012distance}
and have applications in robust finance
\citep{acciaio2021cournot,backhoff2017causal,backhoff2019stability,pflug2015dynamic}
as well as machine learning \citep{xu2021quantized,xu2020cot}. For relations to 
different topologies on spaces of distributions of stochastic processes, see e.g.~\citep{aldous1981weak,backhoff2020all,hellwig1996sequential,pammer2022note}. The resulting optimal transport problems are called causal, bicausal or adapted. 

In relation to this literature, we mention that interesting extensions of the $G$-causal 
optimal transport problem may be possible by using a more flexible definition of $G$-causal couplings through more precise encodings of the information structures, similarly to how \citep{bartl2021wasserstein} extends the space of stochastic processes using filtrations.
\subsubsection{Markovian graph structure}
\label{ss:Markov}

If the graph $G = (V,E)$ has the Markovian structure $E = \{(i-1,i) : 2 \le i \le n\}$, 
a coupling $\pi \in \Pi(\mu, \nu)$ satisfies Definition \ref{def:causal}
if and only if if there exists a pair of random variables $(X, Y) \sim \pi$ together with 
measurable mappings
\[
g_1 \colon \cX_1 \times \R \to \cY_1, \quad
g_i \colon \cX_{i-1:i} \times \cY_{i-1} \times \mathbb{R} \rightarrow \cY_i, \quad i = 2, \dots, n,
\]
and $\R$-valued random variables $U_1, \dots, U_n$ defined on the same 
probability space as $(X,Y)$ such that $X,U_1, \dots, U_n$ are independent and
\[
Y_1 = g_1(X_1, U_1), \quad Y_i = g_i(X_{i-1:i}, Y_{i-1}, U_i), \quad 
i = 2, \dots, n.
\]

\subsubsection{Related concepts}
\label{ss:related}

Different variants of optimal transport have been proposed in the literature
that do not exactly fit into the framework of this paper but are related.
\citep{plecko2020fair} proposes a coupling between two populations, 
such as male and female job applicants, along a given graph with the goal of
achieving a fair embedding of one group into the other. 
\citep{kocaoglu2017causalgan} proposes a GAN framework based on 
structural assumptions similar to \eqref{condcausal}.
\citep{de2021transport} studies collections of couplings between conditional 
distributions. 

\section{$G$-causal couplings for DAGs} \label{sec:char_dag}
\label{sec:DAG}

In causal modelling it is often assumed that the causal structure is given by a 
{\sl directed acyclic graph} (DAG), which is a directed graph with no {\sl cycles}.
By reordering the vertices, a DAG can always assumed to be {\sl sorted}, which 
means that it only contains edges $(i,j)$ such that $i < j$; see e.g.
\citep{bongers2021foundations, lauritzen1996graphical, pearl2009causality}.
If $G$ is a DAG, it is possible to provide alternative characterizations of $G$-causal 
and $G$-bicausal couplings, from which simple properties of the sets of all such couplings
can be derived.

We start by recalling the notions of conditional independence and 
compatibility of a probability measure with a given DAG. 
For more details, we refer to \citep{bongers2021foundations, lauritzen1996graphical, pearl2009causality}.
For random variables $S,W,Z$ defined on the same probability space 
$(\Omega, \cF, \bP)$ with values in measurable spaces $\cS, \cW, \cZ$, we say 
$S$ is independent of $W$ given $Z$ and write $S \ind_Z W$ if 
\be \label{condindep1}
\bP[S \in A, W \in B \mid Z] = \bP[S \in A \mid Z] \, \bP[W \in B \mid Z] \quad \mbox{$\bP$-a.s.}
\ee
for all measurable subsets $A \subseteq \cS$ and $B \subseteq \cW$, which is equivalent to 
\be \label{condindep2}
\bP[S \in A \mid W, Z] = \bP[S \in A \mid Z] \quad \mbox{$\bP$-a.s.}
\ee
for all measurable subsets $A \subseteq \cS$. Since the conditional 
probabilities in \eqref{condindep1}--\eqref{condindep2} can all be 
written as $g(Z)$ or $h(W,Z)$ for measurable functions 
$g \colon \cZ \to [0,1]$ or $h \colon \cW \times \cZ \to [0,1]$ 
(see e.g. \citep[Lemma 1.14]{kal3rd}), 
\eqref{condindep1}--\eqref{condindep2} can equivalently be formulated as 
\[
\rho[A \times B \times \cZ \mid \cF_{\cZ}] 
= \rho[A \times \cW \times \cZ \mid \cF_{\cZ}] \, 
\rho[\cV \times B \times \cZ \mid \cF_{\cZ}] \quad \mbox{$\rho$-a.s.}
\]
and 
\[
\rho[A \times \cW \times \cZ \mid \cF_{\cW, \cZ}] 
= \rho[A \times \cW \times \cZ \mid \cF_{\cZ}] 
\quad \mbox{$\rho$-a.s.}, 
\]
where $\rho$ is the distribution of $(S,W,Z)$ and $\cF_{\cZ}$,
$\cF_{\cW, \cZ}$ are the $\sigma$-algebras generated by the
projections from $\cS \times \cW \times \cZ$ to 
$\cZ$ and $\cW \times \cZ$, respectively. This shows 
that conditions \eqref{condindep1}--\eqref{condindep2}
only depend on the distribution of $(S,W,Z)$.

If $\cS$ and $\cW$ are Polish spaces equipped with their Borel $\sigma$-algebras,
$(S,W)$ has a regular conditional distribution $\rho(ds, dw \mid z)$ given $Z = z$
with respect to $\rho$ (see e.g. \citep[Theorem 3.4]{kal3rd}). In this case,
conditions \eqref{condindep1}--\eqref{condindep2} can equivalently be written as
\[
\rho(ds, dw \mid z) = \rho(ds \mid z) 
\rho(dw \mid z) \quad \mbox{for $Z_{\#} \bP$-almost all $z \in \cZ$.}
\]

\begin{definition}[$G$-compatible probability measures]
	\label{def:Gcomp}
	A measure $\mu \in \mathcal{P}(\cX)$ is said to be compatible with a sorted DAG 
	$G = (V,E)$ if any of the following three equivalent conditions hold. We denote the set of $G$-compatible measures in $\cP(\cX)$ by $\mathcal{P}_G(\cX)$.
	\begin{itemize}
		\item[(i)] There exists a random variable $X \sim \mu$ together with measurable functions
		$f_i : \cX_{\pa_i} \times \mathbb{R} \rightarrow  \cX_i$, $i = 1, \dots, n$, 
		and independent $\mathbb{R}$-valued random variables 
		$U_1, \dots, U_n$ such that
		\be \label{XfU}
		X_i = f_i(X_{\pa_i}, U_i) \quad \text{for all $i =1 , \dots, n$}.
		\ee
		\item[(ii)] 
		For every random variable $X \sim \mu$, one has
		\begin{equation} \label{Xindep}
		X_i \ind_{X_{\pa_i}} X_{1:i-1} \quad \mbox{for all $i = 2, \dots, n.$}
		\end{equation}
		\item[(iii)] 
		
		The measure $\mu$ can be disintegrated as
		\be \label{mudis}
		\mu(dx_1, \dots, dx_n) = \prod_{i=1}^n \mu \brak{dx_i \mid x_{\pa_i}}
		\ee
		for $\mu(dx_1 \mid x_{\pa_1}) = \mu(dx_1)$ and 
		regular conditional distributions $\mu(dx_i \mid x_{\pa_i})$, $i = 2, \dots, n$,
		called {\sl causal mechanisms} in causal modelling.

	\end{itemize}
\end{definition}

\begin{remark}
	The equivalence of conditions (i)--(iii) in Definition \ref{def:Gcomp} 
	follows from the definitions \eqref{condindep1}--\eqref{condindep2}
	of conditional independence together with some well-known properties
	of probability measures on Polish spaces. For the sake of completeness, 
	we give a short proof:
	
	(i) $\Leftrightarrow$ (ii): If (i) holds, there exists an ${\cal X}$-valued random variable
	$X = (X_1, \dots, X_n) \sim \mu$ 
	such that the $X_i$ are of the form $X_i = f_i(X_{\pa_i}, U_i)$ for measurable mappings 
	$f_i \colon \cX_{\pa_i} \times \R \to \cX_i$ and
	independent $\mathbb{R}$-valued random variables $U_i$, $i=1, \dots, n$,
	defined on a probability space $(\Omega, \cF, \bP)$. 
	It follows that $\bP[X_i \in A \mid X_{1:i-1}] = \bP[X_i \in A \mid X_{\pa_i}]$ 
	$\bP$-a.s. for all measurable subsets $A \subseteq \cX_i$, 
	which, by \eqref{condindep2}, shows that (ii) is satisfied.
	
	On the other hand, if (ii) holds, one obtains from \citep[Proposition 8.20]{kal3rd} that, 
	possibly after extending the underlying probability space, the $X_i$ have 
	representations of the form $X_i = f_i(X_{\pa_i}, U_i)$ $\bP$-a.s.
	for measurable functions $f_i \colon \cX_{\pa_i} \times \bR \to \cX_i$ 
	and $U(0,1)$-distributed random variables $U_i$ satisfying 
	$U_i \ind X_{1:i-1}$ for all $i = 2, \dots, n$.
	So $X$ is of the form \eqref{XfU}, except that the representation
	only holds $\bP$-a.s. and the random variables $U_1, \dots, U_n$ 
	are not necessarily independent. But this can easily be remedied by considering 
	independent $U(0,1)$-distributed random variables $\tilde{U}_1, \dots, \tilde{U}_n$
	on a new probability space and iteratively defining 
	$\tilde{X}_i = f_i(\tilde{X}_{\pa_i}, \tilde{U}_i)$, $i = 1, \dots, n$. Then 
	$\tilde{X}_1$ has the same distribution as $X_1$, and for all $i \ge 2$, 
        the conditional distribution of $\tilde{X}_i$ given $\tilde{X}_{1:i-1}$ is 
	equal to the conditional distribution of $X_i$ given $X_{1:i-1}$. So, it follows 
	by induction over $i$ that $\tilde{X} = (\tilde{X}_1, \dots, \tilde{X}_n)$ has the 
	same distribution as $X = (X_1, \dots, X_n)$ and satisfies condition (i).
	
	(ii) $\Leftrightarrow$ (iii): Since $\cX_1, \dots, \cX_n$ are Polish spaces, one obtains 
	from \citep[Theorem 3.4]{kal3rd}  that $\mu$ admits 
	an iterated disintegration of the form
	\[
	\mu(dx_1, \dots, dx_n) = \mu(dx_1) \mu \brak{dx_2 \mid x_1} 
	\cdots \mu \brak{dx_n \mid x_{1:n-1}}
	\]
for regular conditional distributions $\mu (dx_i \mid x_{1:i-1})$, $i=2, \dots, n$,	
from which it can be seen that \eqref{Xindep} is equivalent to \eqref{mudis}.
\end{remark}

The following result provides some elementary properties of the set $\cP_G(\cX)$.

\begin{proposition}
	Let $G = (V,E)$ be a sorted DAG. Then
	\begin{itemize}
		\item[{\rm (i)}] $\cP_G(\cX)$ is non-empty,
		\item[{\rm (ii)}] $\cP_G(\cX)$ is closed in total variation,
		\item[{\rm (iii)}] if ${\cal X}$ is a discrete\footnote{i.e. $\cX$ is a countable 
			discrete topological space} Polish space, 
		$\cP_G(\cX)$ is weakly closed,
		\item[{\rm (iv)}] but there exist Polish spaces $\mathcal{X} = \mathcal{X}_1 \times \dots \times \mathcal{X}_n$ and graphs $G$ such that $\cP_G(\cX)$ is not weakly closed.
	\end{itemize}
\end{proposition}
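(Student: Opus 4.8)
\medskip

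\emph{Part (i): non-emptiness.} The plan is to exhibit an explicit $G$-compatible measure. Pick any $\nu_i \in \cP(\cX_i)$ for each $i$ (the $\cX_i$ are non-empty Polish, so these exist, e.g.\ Dirac masses) and set $\mu = \nu_1 \otimes \cdots \otimes \nu_n$. Using characterization (iii) of Definition \ref{def:Gcomp}, the disintegration $\mu(dx_i \mid x_{\pa_i}) = \nu_i(dx_i)$ does not depend on $x_{\pa_i}$, so \eqref{mudis} holds trivially and $\mu \in \cP_G(\cX)$.

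\medskip

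\emph{Part (ii): closedness in total variation.} I would work with characterization (ii), i.e.\ the conditional independences $X_i \ind_{X_{\pa_i}} X_{1:i-1}$. Suppose $\mu^{(k)} \to \mu$ in total variation with each $\mu^{(k)} \in \cP_G(\cX)$. The key point is that conditional independence, reformulated via regular conditional distributions as $\rho(ds, dw \mid z) = \rho(ds \mid z)\rho(dw \mid z)$, is preserved under total-variation limits. Concretely, for each $i$, let $S = x_i$, $W = x_{1:i-1}$, $Z = x_{\pa_i}$ (note $\pa_i \subseteq \{1,\dots,i-1\}$ since $G$ is sorted, so $W$ determines $Z$). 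Convergence in total variation of the joint laws implies convergence in total variation of all the relevant marginals and, crucially, allows one to pass to the limit in the conditional factorization: a clean way is to test against bounded measurable functions and use that for $\mu^{(k)}$, $\bE_{\mu^{(k)}}[\phi(X_i)\psi(X_{1:i-1})] = \bE_{\mu^{(k)}}[\,\bE_{\mu^{(k)}}[\phi(X_i)\mid X_{\pa_i}]\,\psi(X_{1:i-1})]$, and the map $\mu \mapsto$ (its conditional expectation integrated against a fixed bounded function) is continuous in total variation. Alternatively, and perhaps more robustly, I would use the disintegration form \eqref{mudis}: total-variation convergence of $\mu^{(k)}$ gives, after passing to a subsequence, $\mu^{(k)}$-a.e.\ convergence of the conditional kernels (via a martingale/Radon--Nikodym argument on the finitely many coordinates), and the product structure \eqref{mudis} is stable under this. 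Either route should close the argument; I expect this to be the step requiring the most care, specifically justifying that the conditional kernels of $\mu^{(k)}$ converge appropriately rather than merely the joints.

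\medskip

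\emph{Part (iii): weak closedness for discrete $\cX$.} When $\cX$ is countable discrete, weak convergence of probability measures coincides with pointwise convergence of the mass functions, hence with total-variation convergence on each finite subset and (by Scheffé) with total-variation convergence globally. So (iii) follows immediately from (ii). I would state this reduction and cite Scheffé's lemma.

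\medskip

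\emph{Part (iv): failure of weak closedness in general.} The plan is a concrete counterexample on a non-discrete space, say $\cX_1 = \cX_2 = \cX_3 = [0,1]$ with a $V$-structure graph, e.g.\ $n=3$ with edges $\{(1,3),(2,3)\}$, or even the empty-graph case $n=2$, $E = \emptyset$, where $\cP_G(\cX) = \{\mu_1 \otimes \mu_2\}$ is the set of product measures. It is classical that the set of product measures on $[0,1]^2$ is not weakly closed: take $\mu^{(k)} = \frac{1}{k}\sum_{j=1}^k \delta_{(j/k, j/k)}$ type constructions, or more simply $\mu^{(k)}$ the uniform law on a fine grid arranged so that each marginal converges to Lebesgue measure but the joint converges weakly to the law of $(U,U)$ with $U$ uniform, which is not a product. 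I would verify (a) each $\mu^{(k)}$ is a product measure (hence $G$-compatible), (b) $\mu^{(k)} \to \mu$ weakly, and (c) $\mu$ is not a product measure (its marginals are uniform but it is supported on the diagonal), hence $\mu \notin \cP_G(\cX)$. For the $V$-structure version one instead builds measures where $X_1 \ind X_2$ holds for each $\mu^{(k)}$ but fails in the weak limit; the diagonal-law construction on the $(X_1,X_2)$ coordinates does exactly this. This part is routine once the example is chosen; the only thing to be careful about is confirming weak (not just vague) convergence, which holds since everything lives on a compact space.
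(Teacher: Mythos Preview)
Parts (i)--(iii) are fine and match the paper closely: (i) uses the product measure, (iii) reduces weak to total-variation convergence on discrete spaces, and (ii) hinges on stability of conditional independence under total-variation limits, which the paper isolates as a separate lemma (proved via an $L^2$-projection argument for conditional expectations). Your sketches for (ii) are vaguer than the paper's argument but correctly identify the crux.

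Part (iv), however, has a genuine gap: your proposed counterexamples do not work. For the empty graph on two coordinates, $\cP_G(\cX)$ is exactly the set of product measures, and this set \emph{is} weakly closed on Polish spaces. Indeed, if $\nu_1^{(k)} \otimes \nu_2^{(k)} \to \mu$ weakly, then along a subsequence $\nu_i^{(k)} \to \nu_i$ weakly, and testing against products $f(x_1)g(x_2)$ of bounded continuous functions forces $\mu = \nu_1 \otimes \nu_2$. In particular, no sequence of product measures can converge weakly to the diagonal law of $(U,U)$; your grid construction $\frac{1}{k}\sum_j \delta_{(j/k,j/k)}$ is itself not a product measure, so it does not lie in $\cP_G(\cX)$ to begin with. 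The same obstruction kills the $V$-structure example, since there $G$-compatibility again reduces to the unconditional independence $X_1 \ind X_2$.

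What is needed is a graph that forces a \emph{genuinely conditional} independence $X_i \ind_{X_{\pa_i}} X_{1:i-1}$ with $\pa_i$ nontrivial, so that the conditioning variable can degenerate in the limit. The paper uses the Markov chain $V=\{1,2,3\}$, $E=\{(1,2),(2,3)\}$ on $[-1,1]^3$ with $\mu^k = \tfrac{1}{2}\delta_{(1,1/k,1)} + \tfrac{1}{2}\delta_{(-1,-1/k,-1)}$: each $\mu^k$ is Markovian (the middle coordinate determines the first, hence $X_3 \ind_{X_2} X_1$), but the weak limit $\tfrac{1}{2}\delta_{(1,0,1)} + \tfrac{1}{2}\delta_{(-1,0,-1)}$ has $X_2 \equiv 0$ while $X_1$ and $X_3$ remain perfectly correlated, violating $X_3 \ind_{X_2} X_1$.
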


\begin{proof}
	According to Definition \ref{def:Gcomp}.(iii), the product measure 
	$\mu_1 \otimes \dots \otimes \mu_n$ belongs to $\cP_G(\cX)$
	for arbitrary marginal measures $\mu_i \in \cP(\cX_i)$, $i = 1, \dots, n$.
	This shows (i).
	
	Moreover, it follows from Lemma \ref{lemma:stable} in Appendix \ref{app:stability} below 
	that Definition \ref{def:Gcomp}.(ii) is stable under convergence in total variation, which proves (ii).
	
	(iii) follows from (ii) since in case $\cX$ is a discrete Polish space, weak convergence is equivalent to convergence in total variation.
	
	To show (iv), we consider the Markovian graph $G = (V,E)$ given by $V = \{1, 2, 3\}$
	and $G = \{(1,2), (2,3)\}$. Let $\cX_1 = \cX_2 = \cX_3 = [-1,1]$ and consider the
	product space $\cX = [-1,1]^3$ with the Borel $\sigma$-algebra. The measures
	$\mu^k = \frac{1}{2} \delta_{(1, 1/k, 1)} + \frac{1}{2} \delta_{(-1, -1/k, -1)}$, $k \in \bN$,
	belong to $\cP_G(\cX)$ for all $k \in \bN$ and weakly converge to
	the measure $\frac{1}{2} \delta_{(1, 0, 1)} + \frac{1}{2} \delta_{(-1, 0, -1)}$,
	which does not belong to $\cP_G(\cX)$. This proves (iv).
\end{proof}

The following is the main structural result of this paper. It gives different characterizations of 
$G$-causal couplings between two measures 
$\mu \in \cP_G(\cX)$ and $\nu \in \cP(\cY)$.
Note that for a sorted DAG $G$, one has $\Pi_{G}(\mu, \nu) \subseteq \Pi_{G_l}(\mu, \nu)$, where $G_l$ is the linear graph from Section \ref{ss:linear}, which leads to causal optimal transport between distributions of stochastic processes, for which related results have been shown by e.g.~\citep[Proposition 2.4]{backhoff2017causal}.

\begin{theorem} \label{thm:main}
	Let $G = (V,E)$ be a sorted DAG, and consider measures 
	$\mu \in \mathcal{P}_G(\cX)$, $\nu \in \mathcal{P}(\cY)$ and $\pi \in \Pi(\mu, \nu)$. Then, 
	the following are equivalent:
	\begin{itemize}
		\item[{\rm (i)}] $\pi \in \Pi_{G}(\mu, \nu)$,
		\item[{\rm (ii)}] for $(X, Y) \sim \pi$, one has
		\[ Y_i \ind_{X_{i}, X_{\pa_i}, Y_{\pa_i}} (X, Y_{1:i-1}) 
		\quad \mbox{for all } i = 1 ,\dots, n, \mbox{ where $Y_{1: 0}$ is the
		empty tuple,}
		\]
		\item[{\rm (iii)}] for $(X, Y) \sim \pi$, one has
		\[
		X_i \ind_{X_{\pa_i}} (X_{1:i-1}, Y_{1:i-1}) \quad \mbox{and} \quad
		Y_i \ind_{X_i, X_{\pa_i}, Y_{\pa_i}} (X_{1:i}, Y_{1:i-1}) 
		\quad \mbox{for all } i = 2,\dots, n,
		\]
		\item[{\rm (iv)}] $\pi$ is jointly $G$-compatible, that is,
		\[
		\pi(dx_1, dy_1, \dots, dx_n, dy_n)
		= \prod_{i=1}^n \pi(dx_i, dy_i \mid x_{\pa_i}, y_{\pa_i}),
		\] for regular conditional distributions $\pi(dx_i, dy_i \mid x_{\pa_i}, y_{\pa_i})$, and
		\[
		\pi(d x_i \mid x_{\pa_i}, y_{\pa_i}) = \mu(d x_i \mid x_{\pa_i})
		\quad  \mbox{for all $i = 1, \dots, n$ and 
			$\pi$-almost all $(x,y) \in \cX\times\cY$,}\]
		where $\mu(d x_i \mid x_{\pa_i})$ are regular conditional versions of $\mu$.
	\end{itemize}
\end{theorem}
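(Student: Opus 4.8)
The plan is to prove the theorem by establishing the chain of equivalences (i) $\Leftrightarrow$ (ii) $\Leftrightarrow$ (iii) $\Leftrightarrow$ (iv), where (iv) is the most concrete reformulation in terms of disintegrations and is therefore the natural hub. Throughout I will fix a realization $(X,Y) \sim \pi$ on some probability space and repeatedly use the two equivalent formulations \eqref{condindep1}--\eqref{condindep2} of conditional independence, the fact (established in the preceding discussion) that these only depend on the joint law $\pi$, and the disintegration and kernel-representation lemmas from \cite{kal3rd} that were already invoked for $\cP_G$-compatibility in Definition \ref{def:Gcomp} and its remark. The hypothesis $\mu \in \cP_G(\cX)$ will be used in exactly one place, namely to convert the bare ``first half'' of (iii) (a statement about $X$ alone under $\pi$) into the matching-of-conditionals condition in (iv).

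For (i) $\Rightarrow$ (ii): given the structural equations \eqref{condcausal} with $X, U_1, \dots, U_n$ independent, I would argue inductively in $i$ that the vector $(X, Y_{1:i})$ is a measurable function of $X$ and $U_{1:i}$, hence (since $Y_i = g_i(X_i, X_{\pa_i}, Y_{\pa_i}, U_i)$ and $U_i$ is independent of everything with index $< i$ together with $X$) the conditional law of $Y_i$ given $(X, Y_{1:i-1})$ depends only on $(X_i, X_{\pa_i}, Y_{\pa_i})$; this is precisely the conditional independence in (ii) via \eqref{condindep2}. For (ii) $\Rightarrow$ (i): this is the genuine content and the step I expect to be the main obstacle. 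Starting from the conditional independences in (ii), I want to disintegrate $\pi$ recursively and apply the kernel representation \cite[Lemma 4.22]{kal3rd} to each conditional law $\pi(dy_i \mid x, y_{1:i-1}) = \pi(dy_i \mid x_i, x_{\pa_i}, y_{\pa_i})$ to obtain a measurable $g_i$ and a $U(0,1)$ variable $\tilde U_i$ with $Y_i = g_i(X_i, X_{\pa_i}, Y_{\pa_i}, \tilde U_i)$ and $\tilde U_i$ independent of $(X, Y_{1:i-1})$; the subtlety, exactly as in the remark after Definition \ref{def:Gcomp}, is that these $\tilde U_i$ need not be jointly independent, and also need not be independent of $X$ as a block. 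The fix is the same ``rebuild on a fresh space'' trick: take genuinely independent $U(0,1)$ variables $U_1', \dots, U_n'$ independent of a copy $X' \sim \mu$, define $Y_i'$ recursively by the same $g_i$, and check by induction that $(X', Y') \sim \pi$; here one uses that at each stage the conditional law of $Y_i'$ given $(X', Y_{1:i-1}')$ equals $\pi(dy_i \mid x_i', x_{\pa_i}', y_{\pa_i}')$, which reconstructs $\pi$ because, by (ii), $\pi$ is determined by $\mu$ and these kernels.

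For (ii) $\Leftrightarrow$ (iii): the two conditions differ only in that (iii) additionally asserts $X_i \ind_{X_{\pa_i}} (X_{1:i-1}, Y_{1:i-1})$. Going from (iii) to (ii) is immediate since (ii) is the second clause of (iii). For the converse, I would combine the hypothesis $\mu \in \cP_G(\cX)$, which gives $X_i \ind_{X_{\pa_i}} X_{1:i-1}$ by Definition \ref{def:Gcomp}.(ii), with the clauses of (ii) for indices $1, \dots, i-1$: an induction shows that, conditionally on $X_{1:i-1}$, the block $Y_{1:i-1}$ is built from $X_{1:i-1}$ and fresh randomness, so adjoining $Y_{1:i-1}$ to the conditioning does not disturb the independence of $X_i$ from the past — this is a routine ``contraction/weak-union'' type manipulation of conditional independence, cleanest to write using the kernel form \eqref{condindep2} and the recursive structure already extracted. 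Finally (iii) $\Leftrightarrow$ (iv): using that $\cX, \cY$ and their coordinates are Polish, $\pi$ always disintegrates as $\pi(dx_1,dy_1,\dots) = \bigotimes_{i=1}^n \pi(dx_i,dy_i \mid x_{1:i-1}, y_{1:i-1})$, and the two conditional independences in (iii) say exactly that this kernel does not depend on the full past $(x_{1:i-1}, y_{1:i-1})$ but only on $(x_{\pa_i}, y_{\pa_i})$ — yielding the first display of (iv) — while the first clause of (iii), together with $\mu \in \cP_G(\cX)$ (so that $\mu(dx_i \mid x_{1:i-1}) = \mu(dx_i \mid x_{\pa_i})$), identifies the $x_i$-marginal of $\pi(dx_i, dy_i \mid x_{\pa_i}, y_{\pa_i})$ with $\mu(dx_i \mid x_{\pa_i})$, which is the second display of (iv); all these conversions between conditional-independence statements and equalities of regular conditional distributions are the Polish-space facts recorded just before Definition \ref{def:Gcomp}.
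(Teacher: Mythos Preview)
Your overall architecture matches the paper's, and the arguments for (i)\,$\Leftrightarrow$\,(ii), (ii)\,$\Rightarrow$\,(iii), and (iii)\,$\Leftrightarrow$\,(iv) are essentially the same as what the paper does (including the ``rebuild on a fresh probability space'' trick for (ii)\,$\Rightarrow$\,(i)).

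There is, however, a genuine gap in your treatment of (iii)\,$\Rightarrow$\,(ii). You write that ``(ii) is the second clause of (iii)'', but this is a misreading: the second clause of (iii) asserts
\[
Y_i \ind_{X_i, X_{\pa_i}, Y_{\pa_i}} (X_{1:i}, Y_{1:i-1}),
\]
whereas (ii) asserts the \emph{stronger}
\[
Y_i \ind_{X_i, X_{\pa_i}, Y_{\pa_i}} (X, Y_{1:i-1}),
\]
with the full vector $X$ rather than only $X_{1:i}$. So (iii)\,$\Rightarrow$\,(ii) is not immediate; you must additionally show that $Y_i$ is conditionally independent of the ``future'' $X_{i+1:n}$ given $(X_i, X_{\pa_i}, Y_{\pa_i})$. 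The paper obtains this by applying the \emph{first} clause of (iii) at indices $j = i+1, \dots, n$: from $X_j \ind_{X_{\pa_j}} (X_{1:j-1}, Y_{1:j-1})$ one extracts $Y_i \ind_{X_{1:j-1}, Y_{1:i-1}} X_j$, and then the chain rule (Kallenberg, Theorem~8.12) together with the second clause of (iii) assembles these into (ii). Without this step your chain of equivalences is broken. (Incidentally, the reverse implication, that (ii) gives the second clause of (iii), \emph{is} immediate---but by decomposition, not by identity.)
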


Before turning to the proof of Theorem \ref{thm:main}, we note that 
it implies the following alternative characterizations of $G$-bicausal couplings
between two measures $\mu \in \cP_G(\cX)$ and $\nu \in \cP_G(\cY)$. Particularly the characterization in part (iii) of Corollary \ref{cor:main} is worth emphasizing, as this characterization is very natural and hence serves as validation of Definition \ref{def:causal}. The characterization states that, just like the marginals, a $G$-bicausal coupling also has to be $G$-compatible. And on top of that, \emph{the causal mechanisms of the marginals are coupled by the causal mechanisms of the coupling}. In this way, $G$-bicausal couplings can intuitively be understood as fine-grained couplings, which put the essential building blocks of causal models, the causal mechanisms, into relation.

\begin{corollary} \label{cor:main}
Let $G = (V,E)$ be a sorted DAG, $\mu \in \mathcal{P}_G(\cX)$, 
	$\nu \in \mathcal{P}_G(\cY)$ and $\pi \in \Pi(\mu, \nu)$. 
	Then, the following are equivalent:
	\begin{itemize}
		\item[{\rm (i)}] 
		$\pi \in \Pi^{\rm bc}_G(\mu, \nu)$,
		\item[{\rm (ii)}] 
		for $(X, Y) \sim \pi$, one has
		\[ 
		(X_i, Y_i) \ind_{X_{\pa_i}, Y_{\pa_i}} (X_{1:i-1}, Y_{1:i-1}),
		\quad
		X_i \ind_{X_{\pa_i}} Y_{\pa_i} \quad \mbox{and} 
		\quad  Y_i \ind_{Y_{\pa_i}} X_{\pa_i}
		\quad \mbox{for all } i =2,\dots, n,
		\]
		\item[{\rm (iii)}] $\pi$ is jointly $G$-compatible, that is,
		\[
		\pi(dx_1, dy_1, \dots, dx_n, dy_n) = \prod_{i=1}^n \pi(dx_i, dy_i \mid 
		x_{\pa_i}, y_{\pa_i})
		\]
		for regular conditional distributions $\pi(dx_i, dy_i \mid x_{\pa_i}, y_{\pa_i})$, and
		\[
		\pi(dx_i, dy_i \mid x_{\pa_i}, y_{\pa_i}) \in \Pi(\mu(dx_i \mid x_{\pa_i}), \nu(dy_i \mid y_{\pa_i}))
		\]
		for all $i = 1, \dots, n$ and $\pi$-almost all $(x, y) \in \cX \times \cY$,
		where $\mu(dx_i \mid x_{\pa_i})$ and $\nu(dy_i \mid y_{\pa_i})$
		are regular conditional versions of $\mu$ and $\nu$, respectively.
	\end{itemize}
\end{corollary}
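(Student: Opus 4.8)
The plan is to reduce everything to Theorem \ref{thm:main}, applied once in each direction. Write $\tilde\pi$ for the distribution of $(Y,X)$ obtained from $(X,Y)\sim\pi$; by definition, $\pi\in\Pi^{\rm bc}_G(\mu,\nu)$ precisely when $\pi\in\Pi_G(\mu,\nu)$ and $\tilde\pi\in\Pi_G(\nu,\mu)$. Since $\mu\in\cP_G(\cX)$ and $\nu\in\cP_G(\cY)$, Theorem \ref{thm:main} applies to both $\pi$ (with roles $X,Y$) and $\tilde\pi$ (with roles $Y,X$). For the equivalence (i) $\Leftrightarrow$ (iii) I would simply write out characterization (iv) of Theorem \ref{thm:main} for each of the two problems: $\pi$ being $G$-causal gives joint $G$-compatibility of $\pi$ plus $\pi(dx_i\mid x_{\pa_i},y_{\pa_i})=\mu(dx_i\mid x_{\pa_i})$, while $\tilde\pi$ being $G$-causal gives joint $G$-compatibility of $\tilde\pi$ — which is the same statement, since the disintegration factorization in (iv) is symmetric in the pairs $(x_i,y_i)$ — plus $\pi(dy_i\mid x_{\pa_i},y_{\pa_i})=\nu(dy_i\mid y_{\pa_i})$. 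Combining the two marginal conditions on the kernel $\pi(dx_i,dy_i\mid x_{\pa_i},y_{\pa_i})$ says exactly that this kernel has $x_i$-marginal $\mu(dx_i\mid x_{\pa_i})$ and $y_i$-marginal $\nu(dy_i\mid y_{\pa_i})$, i.e. it lies in $\Pi(\mu(dx_i\mid x_{\pa_i}),\nu(dy_i\mid y_{\pa_i}))$. This is condition (iii) of the corollary, and the argument is fully reversible, so (i) $\Leftrightarrow$ (iii) follows.

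For (i) $\Leftrightarrow$ (ii), the cleanest route is (iii) $\Leftrightarrow$ (ii) via conditional-independence bookkeeping. From (iii), joint $G$-compatibility of $\pi$ as a measure on the product of the pairs $(\cX_i\times\cY_i)$ means, by Definition \ref{def:Gcomp}.(ii) applied to the "super-variables" $Z_i:=(X_i,Y_i)$ with parent set $\pa_i$, that $(X_i,Y_i)\ind_{X_{\pa_i},Y_{\pa_i}}(X_{1:i-1},Y_{1:i-1})$ for $i=2,\dots,n$ — the first relation in (ii). The two remaining relations $X_i\ind_{X_{\pa_i}}Y_{\pa_i}$ and $Y_i\ind_{Y_{\pa_i}}X_{\pa_i}$ are the translation of the marginal-kernel conditions $\pi(dx_i\mid x_{\pa_i},y_{\pa_i})=\mu(dx_i\mid x_{\pa_i})=\pi(dx_i\mid x_{\pa_i})$ and symmetrically for $y_i$: indeed, once we know $\pi$ is jointly $G$-compatible, $\mu$ is $G$-compatible (it is the $\cX$-marginal of a jointly $G$-compatible measure, or one invokes the first relation of (ii)), so $\mu(dx_i\mid x_{\pa_i})$ coincides with the conditional law of $X_i$ given $X_{\pa_i}$ under $\pi$, and the kernel identity $\pi(dx_i\mid x_{\pa_i},y_{\pa_i})=\pi(dx_i\mid x_{\pa_i})$ is exactly $X_i\ind_{X_{\pa_i}}Y_{\pa_i}$ by \eqref{condindep2}. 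Conversely, given (ii), the first relation yields joint $G$-compatibility of $\pi$ (Definition \ref{def:Gcomp}.(ii) for the pairs $Z_i$), and then the other two relations upgrade to the kernel identities in (iii). One small point to check here is that $\mu$ is automatically $G$-compatible under (ii): this is immediate because $X_i\ind_{X_{\pa_i}}(X_{1:i-1},Y_{1:i-1})$ implies $X_i\ind_{X_{\pa_i}}X_{1:i-1}$ by the contraction/decomposition property of conditional independence, which combined with the first relation of (ii) (restricted to the $X$-coordinates) gives Definition \ref{def:Gcomp}.(ii) for $\mu$; but actually we have assumed $\mu\in\cP_G(\cX)$ outright, so this is not even needed.

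The main obstacle I expect is purely notational rather than conceptual: carefully verifying that the "joint $G$-compatibility" disintegration in Theorem \ref{thm:main}.(iv) — written over the interleaved coordinates $x_1,y_1,\dots,x_n,y_n$ — is genuinely symmetric under swapping $X\leftrightarrow Y$, so that applying Theorem \ref{thm:main} to $\tilde\pi$ produces the same factorization and not a reindexed one. This hinges on the fact that $G$ is sorted and on both $\mu$ and $\nu$ being $G$-compatible with respect to the same vertex order, so the disintegration order $i=1,\dots,n$ is legitimate from either side; once that is spelled out the rest is routine translation between the kernel form of (iii)/(iv) and the conditional-independence form of (ii) using \eqref{condindep1}–\eqref{condindep2} and the standard semigraphoid properties (symmetry, decomposition, contraction) of conditional independence on Polish spaces. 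I would also make sure, in the step where I combine the two marginal conditions on $\pi(dx_i,dy_i\mid x_{\pa_i},y_{\pa_i})$, that a version of the conditional kernel can be chosen that simultaneously satisfies both marginal identities $\pi$-a.s.; this is standard since both identities hold $\pi$-a.s. and a countable intersection of conull sets is conull, so a single good version exists.
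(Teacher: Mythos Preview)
Your proposal is correct and follows essentially the same approach as the paper: the equivalence (i) $\Leftrightarrow$ (iii) is obtained by applying Theorem \ref{thm:main}(i)$\Leftrightarrow$(iv) in both directions and combining the two marginal-kernel conditions, while (ii) $\Leftrightarrow$ (iii) is obtained by disintegrating $\pi$ along the pair variables $(x_i,y_i)$ and translating joint $G$-compatibility plus the kernel marginal conditions into the three conditional-independence relations. The paper's proof is extremely terse (two sentences), so your version simply spells out the bookkeeping; the only inessential wrinkle is your parenthetical remark that the $\cX$-marginal of a jointly $G$-compatible measure is automatically $G$-compatible, which is not true in general, but as you correctly note this is moot since $\mu\in\cP_G(\cX)$ is assumed.
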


\noindent
{\textbf{Proof of Theorem \ref{thm:main}.}}
(i) $\Rightarrow$ (ii): If (i) holds, there exists $(X,Y) \sim \pi$ such that 
the $Y_i$ are of the form $Y_i = g_i(X_i, X_{\pa_i}, Y_{\pa_i}, U_i)$ for measurable functions 
$g_i \colon \cX_i \times \cX_{\pa_i} \times \cY_{\rm pa_i} \times \bR \to \cY_i$ 
and $\R$-valued random variables $U_i$, $i = 1, \dots, n$ defined on the same 
probability space as $(X,Y)$ such that $X,U_1, \dots, U_n$ are independent. It follows that
$U_i \ind (X, Y_{1:i-1})$, and therefore, 
$\bP[Y_i \in A \mid X, Y_{1:i-1}] = \bP[Y_i \in A \mid X_i, X_{\pa_i}, Y_{\pa_i}]$ $\bP$-a.s.
for every $i$ and all measurable subsets $A \subseteq \cY_i$, which by 
\eqref{condindep2}, shows that (ii) is satisfied.

(ii) $\Rightarrow$ (i):
If $(X,Y)$ fulfils (ii), one obtains 
from \citep[Proposition 8.20]{kal3rd} that, possibly after extending the underlying 
probability space, the $Y_i$ can be represented as
$Y_i = g_i(X_i, X_{\pa_i}, Y_{\pa_i}, U_i)$ $\bP$-a.s.
for measurable functions $g_i \colon \cX_i \times \cX_{\pa_i} \times \cY_{\pa_i} \times \R \to \cY_i$
and $U(0,1)$-distributed random variables $U_i$ satisfying
$U_i \ind (X, Y_{1:i-1})$ for all $i = 1, \dots, n$. So $(X,Y)$ is of the form 
\eqref{condcausal}, except that the representation only holds $\bP$-a.s.
and the random variables $U_1, \dots, U_n$ are not 
necessarily independent. However, by extending the probability space further
if necessary, one can assume that there exist $U(0,1)$-distributed random 
variables $\tilde{U}_1, \dots, \tilde{U}_n$ 
such that $X, \tilde{U}_1, \dots, \tilde{U}_n$ are independent. If one iteratively
defines $\tilde{Y}_i = g_i(X_i, X_{\pa_i}, \tilde{Y}_{\pa_i}, \tilde{U}_i)$, $i = 1, \dots, n$,
then the conditional distribution of $\tilde{Y}_i$ given $(X_i, X_{\pa_i}, \tilde{Y}_{\pa_i})$
is for all $i$ equal to the conditional distribution of $Y_i$ given $(X_i, X_{\pa_i}, Y_{\pa_i})$.
It follows by induction over $i$ that $(X,\tilde{Y})$ has the same distribution 
as $(X,Y)$, and it satisfies all requirements of Definition \ref{def:causal}. So,
(i) holds.

(ii) $\Rightarrow$ (iii): The second condition of (iii) is immediate from (ii).
In addition, one obtains from (ii) that $Y_j \ind_{X_{j}, X_{\pa_j}, Y_{\pa_j}} X_i$ 
for all $1 \le i, j \le n$. In particular, $X_i \ind_{X_{1:i-1}, Y_{1:j-1}} Y_j$ 
for all $1 \le j < i \le n$, which, by the chain rule \citep[Theorem 8.12]{kal3rd} yields 
$X_i \ind_{X_{1:i-1}} Y_{1:i-1}$. This together with 
$X_i \ind_{X_{\pa_i}} X_{1:i-1}$, which holds since $\mu \in \cP_G(\mu)$,
gives $X_i \ind_{X_{\pa_i}} (X_{1:i-1}, Y_{1:i-1})$ for all $i = 2, \dots, n$, showing that also the
first condition of (iii) holds.

(iii) $\Rightarrow$ (ii):
The first condition of (iii) implies
$X_j \ind_{X_{1:j-1}} Y_{1:i}$, and therefore,
$Y_i \ind_{X_{1:j-1}, Y_{1:i-1}} X_j$ for all $1 \le i < j \le n$. 
This, together with the second condition of (iii) and the 
chain rule \citep[Theorem 8.12]{kal3rd}, yields (ii). 

(iii) $\Leftrightarrow$ (iv): Since $\cX_i$, $\cY_i$, $i = 1, \dots, n$, are Polish 
spaces, it follows from \citep[Theorem 3.4]{kal3rd} that $\pi$ can be disintegrated as
\[
\pi(dx_1, dy_1, \dots, dx_n, dy_n) 
= \prod_{i=1}^n \pi \brak{dx_i \mid x_{1:i-1}, y_{1:i-1}}
\pi \brak{dy_i \mid x_{1:i} \, , y_{1:i-1}},
\]
where $x_{1:0}$ and $y_{1:0}$ are empty tuples.
Since $\pi \in \Pi(\mu, \nu)$, one has
\[
X_i \ind_{X_{\pa_i}} (X_{1:i-1}, Y_{1:i-1}) \quad \Leftrightarrow \quad
\pi(d x_i \mid x_{1:i-1}, y_{1:i-1}) = \mu(d x_i \mid x_{\pa_i})
\quad  \mbox{$\pi$-almost surely}
\]
and
\[
Y_i \ind_{X_i, X_{\pa_i}, Y_{\pa_i}} (X_{1:i}, Y_{1:i-1}) \quad \Leftrightarrow \quad
\pi \brak{dy_i \mid x_{1:i} \, , y_{1:i-1}} =
\pi \brak{dy_i \mid x_i, x_{\pa_i} , y_{\pa_i}}
\quad  \mbox{$\pi$-almost surely,}
\]
which shows that (iii) is equivalent to 
\[
\pi(dx_1, dy_1, \dots, dx_n, dy_n) 
= \prod_{i=1}^n \mu \brak{dx_i \mid x_{\pa_i}}
\pi \brak{dy_i \mid x_i, x_{\pa_i} , y_{\pa_i}},
\]
which, in turn, is equivalent to (iv). \qed

\bigskip \noindent
{\textbf{Proof of Corollary \ref{cor:main}.}} 
(i) $\Leftrightarrow$ (iii) follows directly from the equivalence of (i) and (iv) 
of Theorem \ref{thm:main}.

(ii) $\Leftrightarrow$ (iii): As in the proof of Theorem \ref{thm:main}, 
we obtain from \citep[Theorem 3.4]{kal3rd} that $\pi$ can be disintegrated as
\[
\pi(dx_1, dy_1, \dots, dx_n, dy_n) 
= \prod_{i=1}^n \pi \brak{dx_i, dy_i \mid x_{1:i-1}, y_{1:i-1}},
\]
where $x_{1:0}$ and $y_{1:0}$ are empty tuples, 
from which it can be seen that (ii) is equivalent to (iii). \qed

\bigskip
In the following result we derive some elementary properties of the sets
$\Pi_{G}(\mu, \nu)$ and $\Pi^{\rm bc}_{G}(\mu, \nu)$.

\begin{proposition} \label{prop:basic_prop}
	Let $G = (V,E)$ be a sorted DAG and
	$\mu \in \mathcal{P}_G(\cX)$, $\nu \in \mathcal{P}_G(\cY)$. Then
	\begin{itemize}
		\item[{\rm (i)}] $\Pi_{G}(\mu, \nu)$ and $\Pi^{\rm bc}_{G}(\mu, \nu)$ are both non-empty,
		\item[{\rm (ii)}] $\Pi_{G}(\mu, \nu)$ and $\Pi^{\rm bc}_{G}(\mu, \nu)$ are closed in
		total variation,
		\item[{\rm (iii)}] 
		if $\mu$ and $\nu$ are finitely supported,
		$\Pi_{G}(\mu, \nu)$ and $\Pi^{\rm bc}_{G}(\mu, \nu)$ are weakly closed,
		\item[{\rm (iv)}] but in general, there exist $\cX$, $\cY$ and $G$ such that $\Pi_{G}(\mu, \nu)$ and $\Pi^{\rm bc}_{G}(\mu, \nu)$ are not weakly closed.
	\end{itemize}
\end{proposition}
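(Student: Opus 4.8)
The plan is to deduce all four parts from the characterizations of $G$-causal and $G$-bicausal couplings obtained in Theorem \ref{thm:main} and Corollary \ref{cor:main}, together with the corresponding properties of $\cP_G$ established in the preceding proposition and the stability lemma from Appendix \ref{app:stability}.

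For part (i), I would exhibit an explicit element. Since $\mu \in \cP_G(\cX)$ and $\nu \in \cP_G(\cY)$ disintegrate along the DAG as in Definition \ref{def:Gcomp}.(iii), the coupling defined by $\pi(dx_i, dy_i \mid x_{\pa_i}, y_{\pa_i}) := \mu(dx_i \mid x_{\pa_i}) \otimes \nu(dy_i \mid y_{\pa_i})$, chained over $i = 1, \dots, n$, produces a measure in $\Pi(\mu, \nu)$ that satisfies condition (iii) of Corollary \ref{cor:main}, hence lies in $\Pi^{\rm bc}_G(\mu, \nu) \subseteq \Pi_G(\mu, \nu)$; one should check it has the right marginals by an induction on $i$ using that $\mu, \nu$ are $G$-compatible.

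For part (ii), I would invoke the conditional-independence characterizations: Theorem \ref{thm:main}.(iii) describes $\Pi_G(\mu,\nu)$ as the set of $\pi \in \Pi(\mu,\nu)$ satisfying finitely many conditional independence relations among coordinates of $(X,Y)$, and Corollary \ref{cor:main}.(ii) does the same for $\Pi^{\rm bc}_G(\mu,\nu)$. Each such relation is, by Lemma \ref{lemma:stable}, stable under convergence in total variation, and $\Pi(\mu,\nu)$ itself is total-variation closed; intersecting finitely many closed sets gives (ii). Part (iii) then follows immediately from (ii) together with the fact that on a finite support weak convergence coincides with total variation convergence.

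For part (iv), I would reuse the example from the proof of the previous proposition: take the Markovian graph on $V = \{1,2,3\}$ with $\cX_i = \cY_i = [-1,1]$, and set $\mu = \nu = \frac12 \delta_{(1,0,1)} + \frac12 \delta_{(-1,0,-1)}$. Approximate $\mu$ and $\nu$ by the $G$-compatible measures $\mu^k = \nu^k = \frac12 \delta_{(1,1/k,1)} + \frac12 \delta_{(-1,-1/k,-1)}$, pick the ``diagonal'' bicausal coupling $\pi^k$ of $\mu^k$ with $\nu^k$ that maps each atom to itself, and observe $\pi^k$ converges weakly to the diagonal coupling $\pi$ of $\mu$ with itself; then argue $\pi \notin \Pi_G(\mu,\nu)$ because its second-coordinate marginal $\mu \notin \cP_G(\cX)$ (a $G$-causal coupling forces the $Y$-marginal to be $G$-compatible, as one sees from Theorem \ref{thm:main}.(iv) or directly from Definition \ref{def:causal}), and likewise $\pi \notin \Pi^{\rm bc}_G(\mu,\nu)$. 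A small subtlety to address is that one is perturbing $\mu$ and $\nu$ simultaneously, so strictly speaking this shows the union over all $\mu,\nu$ of the coupling sets is not weakly closed; to get the statement for fixed marginals one should instead keep $\mu = \nu = \frac12\delta_{(1,0,1)} + \frac12\delta_{(-1,0,-1)}$ fixed and perturb only the couplings, exhibiting $G$-bicausal $\pi^k \in \Pi^{\rm bc}_G(\mu,\nu)$ that converge weakly to a non-$G$-causal limit in $\Pi(\mu,\nu)$.

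The main obstacle is the last point in part (iv): one must produce a sequence of genuinely $G$-(bi)causal couplings \emph{between the fixed marginals} whose weak limit fails to be $G$-causal, which requires choosing marginals $\mu, \nu$ for which $\Pi_G(\mu,\nu)$ is strictly larger than $\Pi^{\rm bc}_G(\mu,\nu)$ or at least exploits that the conditional-independence constraints of Theorem \ref{thm:main}.(iii), while total-variation closed, are not weakly closed when the conditioning variables have atoms that can merge in the limit. I expect a workable choice is a graph with a vertex $j$ having two parents, marginals under which $X_{\pa_j}$ is ``almost surely injective into'' its value but degenerates in the limit, so that the constraint $Y_j \ind_{X_j, X_{\pa_j}, Y_{\pa_j}} (X_{1:j}, Y_{1:j-1})$ becomes vacuous along the sequence but binding in the limit.
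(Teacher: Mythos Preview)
Your treatment of parts (i)--(iii) matches the paper's proof almost verbatim: the product measure $\mu \otimes \nu$ for (i), Lemma~\ref{lemma:stable} applied to the conditional-independence characterizations of Theorem~\ref{thm:main} and Corollary~\ref{cor:main} for (ii), and the equivalence of weak and total-variation convergence on a finite support for (iii).

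Part (iv), however, has a genuine gap. Your candidate marginal $\mu = \nu = \tfrac12\delta_{(1,0,1)} + \tfrac12\delta_{(-1,0,-1)}$ is \emph{not} $G$-compatible for the Markovian graph (indeed, $X_3$ is not independent of $X_1$ given $X_2=0$), so it violates the hypothesis $\mu \in \cP_G(\cX)$, $\nu \in \cP_G(\cY)$ of the proposition. You noticed that your first construction only perturbs marginals and tried to repair it by ``keeping $\mu = \nu$ fixed,'' but the fixed measure you chose is precisely the non-$G$-compatible limit, so the repair fails for the same reason. Your closing paragraph has the right intuition---one needs conditioning variables that carry information along the sequence but lose it in the limit---but stops short of a construction.

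The paper's example is more delicate than anything in your proposal: it fixes the \emph{$G$-compatible} product measure $\mu = \nu = \theta \otimes \eta \otimes \theta$ on $[0,1]^3$ with $\theta = \tfrac12(\delta_0 + \delta_1)$ and $\eta$ uniform on $[0,1]$, and builds $\pi^k \in \Pi^{\rm bc}_G(\mu,\nu)$ by coupling the middle coordinates via two distinct \emph{shifted} diagonals $\gamma^{1,k}, \gamma^{2,k} \in \Pi(\eta,\eta)$, chosen according to the value of $(x_1,y_1)$. The third-coordinate kernel then reads off which shifted diagonal $(x_2,y_2)$ lies on. As $k \to \infty$ both shifted diagonals collapse onto the true diagonal, so in the weak limit the middle coordinate no longer encodes $(x_1,y_1)$, and the limiting coupling fails to be Markovian in $(x,y)$ jointly---hence lies outside $\Pi^{\rm bc}_G(\mu,\nu)$. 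The continuous middle coordinate is essential here; with finitely supported $\mu,\nu$ part (iii) would block any such example.
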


\begin{proof}
	Since $\mu$ and $\nu$ are both compatible with $G$, they 
	admit a decomposition of the form \eqref{mudis}. So, it can be seen 
	from Corollary \ref{cor:main} that the product measure $\mu \otimes \nu$ 
	belongs to $\Pi^{\rm bc}_{G}(\mu, \nu)$, which is contained in $\Pi_{G}(\mu, \nu)$.
	This shows (i).
	
	By Lemma \ref{lemma:stable} in Appendix \ref{app:stability}, conditions (ii) and (iii) 
	of Theorem \ref{thm:main} are stable under convergence in total variation, which
	implies (ii).
	
	Regarding (iii), if $\mu$ and $\nu$ have finite supports $S_{\mu} \subseteq \cX$ and 
	$S_{\nu} \subseteq \cY$, respectively, every measure $\pi \in \Pi(\mu, \nu)$ is 
	supported by the finite subset $S_{\mu} \times S_{\nu} \subseteq \cX \times \cY$. Since
	weak convergence among such measures is equivalent to convergence in 
	total variation, (iii) follows from (ii).
	
	To show (iv), we consider a Markovian graph $G = (V,E)$ with $V = \{1, 2, 3\}$ and 
	$E = \{(1,2), (2,3) \}$. Consider the measures
	$\mu = \nu = \theta \otimes \eta \otimes \theta$ on $[0,1]^3$, where 
	$\theta = \frac{1}{2}(\delta_{0} + \delta_1)$ and $\eta$ is the uniform 
	distribution on $[0,1]$. We show that $\Pi^{\rm bc}_G(\mu, \nu)$ is not weakly 
	closed. Then $\Pi_G(\mu, \nu)$ cannot be weakly closed either. Otherwise, 
	$\Pi_G(\nu, \mu)$ and, as a consequence, $\Pi^{\rm bc}_G(\mu, \nu)$ 
	would also have to be weakly closed. To prove that $\Pi^{\rm bc}_G(\mu, \nu)$ 
	is not weakly closed, we denote by $\gamma \in \Pi(\eta, \eta)$ the uniform 
	distribution on the diagonal $D = \{(x,x) \in [0,1]^2 : x \in [0,1] \}$ and by 
	$\gamma^{1,k}, \gamma^{2,k} \in \Pi(\eta, \eta)$, $k \ge 3$, the uniform distributions 
	on the shifted diagonals $D^{1, k}, D^{2, k}$ given by
	\begin{align*}
	D^{1, k} &= \left\{(x, x+ 1/k): x \in [0, 1- 1/k]\right\} 
	\cup \left\{(x, x+ 1/k -1) : x \in (1- 1/k, 1]\right\}, \\
	D^{2, k} &= \left\{(x, x + 1- 1/k) : x \in [0, 1/k]\right\} \cup 
	\left\{[x, x- 1/k) : x \in [1/k, 1]\right\}.
	\end{align*}
	Clearly, $\gamma^{1,k}$ and $\gamma^{2,k}$ both weakly converge to $\gamma$
	for $k \to \infty$. Now, let us define the measures $\pi^k$, $k \ge 3$, on $([0,1]^2)^3$
	through 
	\[
	\pi^k(dz_1, dz_2, dz_3) = \pi_1^k(dz_1) \pi_{1, 2}^k(z_1, dz_2) \pi_{2, 3}^k(z_2, dz_3)
	\] 
	for
	\[
	\pi_1^k = \frac{1}{2}(\delta_{(0, 0)} + \delta_{(1, 1)}), \quad
	\pi_{1, 2}^k(0, 0) = \gamma^{1, k}, \quad
	\pi_{1, 2}^k(1, 1) = \gamma^{2, k}
	\]
	and
	\[
	\pi_{2, 3}^k(z_2) = 
	\begin{cases}
	\frac{1}{2} (\delta_{(0, 0)} + \delta_{(1, 1)}) \text{ if } z_2 \in D^{1, k}\\
	\frac{1}{2} (\delta_{(0, 1)} + \delta_{(1, 0)}) \text{ if } z_2 \in D^{2, k}.
	\end{cases}
	\]
	Then, $\pi^k \in \Pi^{\rm bc}_G(\mu, \nu)$ for all $k \ge 3$, and for $k \to \infty$, $\pi^k$ weakly 
	converges to $\pi$ given by
	\[
	\pi(dz_1, dz_2, dz_3) = \pi_1(dz_1) \pi_{1, 2}(dz_2) \pi_{2, 3}(z_1, dz_3)
	\] 
	for
	\[
	\pi_1 = \frac{1}{2}(\delta_{(0, 0)} + \delta_{(1, 1)}), \quad
	\pi_{1, 2} = \gamma
	\]
	and
	\[
	\pi_{2, 3}(z_1) = 
	\begin{cases}
	\frac{1}{2} (\delta_{(0, 0)} + \delta_{(1, 1)}) \text{ if } z_1 = (0,0)\\
	\frac{1}{2} (\delta_{(0, 1)} + \delta_{(1, 0)}) \text{ if } z_1 = (1,1).
	\end{cases}
	\]
	So $\pi$ is no longer Markovian, and therefore, $\pi \not\in \Pi^{\rm bc}_G(\mu, \nu)$, 
	which proves (iv).
\end{proof}

\section{$G$-causal Wasserstein distances}
\label{sec:Wasserstein}

In this section, we study Wasserstein distances arising from $G$-bicausal couplings.
We show that they define semimetrics on appropriate spaces of $G$-compatible probability
measures but in general do not satisfy the triangle inequality. In Section \ref{subsec:ate} 
we prove that under suitable assumptions,
average treatment effects are continuous with respect to $G$-causal Wasserstein 
distance, and in Section \ref{subsec:closeclose} we show that small perturbations of structural 
causal models lead to small deviations in $G$-causal Wasserstein distance. 
In Subsection \ref{subsec:geodesic} we investigate the interpolation of two $G$-compatible 
measures such that the causal structure given by $G$ is respected.
In the whole section, we let $\cX = \cY$ and denote by $d_{\cX}$ a metric 
$d_{\cX} \colon \cX \times \cX \to \bR_+$ generating\footnote{we assume
	$d_{\cX}$ generates the topology, but $\cX$ is not necessarily complete with respect to 
	$d_{\cX}$} the given Polish topology on $\cX$.

\begin{remark}[Standard Wasserstein distances]
	For a $p \in [1, \infty)$, we denote by $\cP_p(\cX)$ the set of all 
	measures $\mu \in \cP(\cX)$ satisfying 
	\be \label{x0}
	\int_{\cX} d_{\cX}(x, x_0)^p \mu(dx) < \infty \quad \mbox{for some } x_0 \in \cX.
	\ee
	Note that \eqref{x0} implies 
	\[
	\int_{\cX} d_{\cX}(x, x'_0)^p \mu(dx)
	\le 2^{p-1} \brak{ \int_{\cX} d_{\cX}(x,x_0)^p \mu(dx) + 
		d_{\cX}(x_0, x'_0)^p} < \infty \quad \mbox{for every } x'_0 \in \cX.
	\]
	So, if \eqref{x0} holds for one, it holds for all $x_0 \in \cX$. 
	The standard $p$-Wasserstein distance $W_p(\mu, \nu)$ between two 
	measures $\mu$ and $\nu$ in $\cP_p(\cX)$ is given by 
	\[
	W_p^p(\mu, \nu) := \inf_{\pi \in \Pi(\mu, \nu)} 
	\int_{\cX \times \cX} d_{\cX}(x, y)^p \,\pi(dx, dy).
	\]
	It is well known (see e.g. \citep{villani2009optimal}) that the infimum is attained and
	$W_p$ defines a metric on $\cP_p(\cX)$. Moreover, 
	$\Pi(\mu, \nu)$ is a weakly compact subset of $\cP(\cX \times \cX)$.
\end{remark}
In the following definition we introduce Wasserstein distances based on $G$-bicausal couplings. For causal Wasserstein distances between distributions of stochastic processes, see \citep{backhoff2020all} and the references therein.
\begin{definition}[$G$-causal Wasserstein distances]\label{def:gwasserstein}
	Let $G = (V,E)$ be a sorted DAG and $p \in [1, \infty)$. 
	We denote $\cP_{G,p}(\cX) = \cP_G(\cX) \cap \cP_p(\cX)$
	and define the $p$-th order $G$-causal Wasserstein distance 
	$W_{G, p}(\mu, \nu)$ between two measures $\mu$ and 
	$\nu$ in $\mathcal{P}_{G,p}(\cX)$ by
	\be \label{WGp}
	W^p_{G, p}(\mu, \nu) 
	:= \inf_{\pi \in \Pi^{\rm bc}_G(\mu, \nu)} \int_{\cX \times \cX} d_{\cX}(x, y)^p \,\pi(dx, dy).
	\ee
\end{definition}

The following result provides some elementary properties of 
$G$-causal Wasserstein distances.

\begin{proposition} \label{prop:wassersteinprop}
	Let $G = (V,E)$ be a sorted DAG and $p \in [1, \infty)$. Then
	\begin{itemize}
		\item[{\rm (i)}] for another directed graph $G' = (V, E')$ with $E' \subseteq E$, one has 
		$W_{G, p} \leq W_{G', p}$, and therefore,
		\[
		\{\nu \in \mathcal{P}(\cX) : W_{G', p}(\mu, \nu) \leq r \} 
		\subseteq \{\nu \in \mathcal{P}(\cX) : W_{G, p}(\mu, \nu) \leq r \}
		\]
		for all $\mu \in \mathcal{P}(\cX)$ and $r > 0$, 
		\item[{\rm (ii)}] $W_{G,p}$ is a semimetric\footnote{that is,
			it has all properties of a metric except the triangle inequality} on $\mathcal{P}_{G,p}(\cX)$
		and
		\item[{\rm (iii)}] if $\mu, \nu \in \cP_{G,p}(\cX)$ are finitely supported, the infimum 
		in \eqref{WGp} is attained.
	\end{itemize} 
\end{proposition}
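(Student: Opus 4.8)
The plan is to verify the three claims in Proposition \ref{prop:wassersteinprop} one at a time, using the characterization of $\Pi^{\rm bc}_G$ from Corollary \ref{cor:main} and the structural facts from Proposition \ref{prop:basic_prop} as the main tools.

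For part (i), the key observation is that if $E' \subseteq E$, then every $G$-bicausal coupling is also $G'$-bicausal, i.e. $\Pi^{\rm bc}_G(\mu,\nu) \subseteq \Pi^{\rm bc}_{G'}(\mu,\nu)$. This follows most transparently from the representation in Definition \ref{def:causal}: if $(X,Y)\sim\pi$ satisfies $Y_i = g_i(X_i, X_{\pa_i}, Y_{\pa_i}, U_i)$ with the graph $G$, then since $\pa_i^{G'} \subseteq \pa_i^{G}$, one can view $g_i$ as a function that simply ignores the extra coordinates in $X_{\pa_i}, Y_{\pa_i}$ not present in $X_{\pa_i^{G'}}, Y_{\pa_i^{G'}}$ — wait, that's the wrong direction. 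Let me reconsider: actually one wants $\Pi^{\rm bc}_G \subseteq \Pi^{\rm bc}_{G'}$, and the cleanest route is via Corollary \ref{cor:main}(ii): the conditional independence statements for $G$ (with larger parent sets) imply, by the semi-graphoid contraction/weak-union properties of conditional independence \cite[Theorem 8.12]{kal3rd}, the corresponding statements for $G'$ with smaller parent sets. Once the inclusion of coupling sets is established, taking the infimum of $\int d_\cX(x,y)^p\,d\pi$ over a larger set gives a smaller value, so $W_{G,p} \le W_{G',p}$; the inclusion of balls is then immediate.

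For part (ii), symmetry $W_{G,p}(\mu,\nu) = W_{G,p}(\nu,\mu)$ follows because $\Pi^{\rm bc}_G$ is symmetric by definition (if $\pi$ is $G$-bicausal then so is its reflection $\tilde\pi \sim (Y,X)$) and $d_\cX$ is symmetric. Non-negativity is clear. For $W_{G,p}(\mu,\nu) = 0 \iff \mu = \nu$: the direction $\mu = \nu \Rightarrow W_{G,p} = 0$ follows by using the ``identity coupling'' — since $\mu \in \cP_G(\cX)$ disintegrates as in \eqref{mudis}, the coupling built from $\pi(dx_i, dy_i \mid x_{\pa_i}, y_{\pa_i})$ being the pushforward of $\mu(dx_i \mid x_{\pa_i})$ under the diagonal map (restricted to $x_{\pa_i} = y_{\pa_i}$) is $G$-bicausal by Corollary \ref{cor:main}(iii) and concentrated on the diagonal, giving integral zero. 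Conversely, $W_{G,p}(\mu,\nu) = 0$ implies $W_p(\mu,\nu) = 0$ since $\Pi^{\rm bc}_G \subseteq \Pi$, and $W_p$ is a genuine metric, so $\mu = \nu$. Finiteness of $W_{G,p}$ on $\cP_{G,p}(\cX)$ follows because $\mu \otimes \nu \in \Pi^{\rm bc}_G(\mu,\nu)$ by Proposition \ref{prop:basic_prop}(i) and $\int d_\cX(x,y)^p \, d(\mu\otimes\nu) \le 2^{p-1}\big(\int d_\cX(x,x_0)^p\,d\mu + \int d_\cX(x_0,y)^p\,d\nu\big) < \infty$ using the moment bound from the preceding remark.

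For part (iii), when $\mu, \nu$ are finitely supported, every $\pi \in \Pi(\mu,\nu)$ is supported on the finite set $S_\mu \times S_\nu$, so $\Pi^{\rm bc}_G(\mu,\nu)$ is a subset of a finite-dimensional simplex; by Proposition \ref{prop:basic_prop}(iii) it is weakly closed, hence compact (being also bounded), and the cost $\pi \mapsto \int d_\cX(x,y)^p\,d\pi$ is continuous (in fact linear) on this finite-dimensional set, so the infimum is attained. The main obstacle, if any, is keeping the conditional-independence bookkeeping in part (i) correct — specifically checking that shrinking the parent sets genuinely weakens the CI constraints in Corollary \ref{cor:main}(ii); I would address this by working directly with the functional representation of Definition \ref{def:causal}, where the inclusion $\Pi^{\rm bc}_G \subseteq \Pi^{\rm bc}_{G'}$ is most evident (a coupling causal along $G$ is in particular causal along $G'$ after absorbing redundant arguments, since having \emph{more} potential inputs $X_{\pa_i}, Y_{\pa_i}$ available only enlarges the class, so the class for the sparser graph $G'$ sits inside — one must be careful that $E'\subseteq E$ means $G$ has \emph{more} edges, hence $G$-causality is the more permissive notion, giving $\Pi^{\rm bc}_{G'}\subseteq\Pi^{\rm bc}_G$ and therefore $W_{G,p}\le W_{G',p}$, consistent with the statement).
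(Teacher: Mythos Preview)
Your arguments for (ii) and (iii) are correct and essentially identical to the paper's: for (ii) the paper also uses the diagonal coupling $(X,X)$ to get $W_{G,p}(\mu,\mu)=0$, the product coupling $\mu\otimes\nu\in\Pi^{\rm bc}_G(\mu,\nu)$ for finiteness, and the domination $W_{G,p}\ge W_p$ for strict positivity when $\mu\neq\nu$; for (iii) the paper likewise extracts a weakly convergent minimizing sequence in $\Pi(\mu,\nu)$ and invokes Proposition~\ref{prop:basic_prop}(iii) to place the limit in $\Pi^{\rm bc}_G(\mu,\nu)$.

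For (i), your \emph{final} sentence is correct and is exactly the paper's argument: $E'\subseteq E$ gives $\pa'_i\subseteq\pa_i$, so any representation $Y_i=g_i(X_i,X_{\pa'_i},Y_{\pa'_i},U_i)$ is trivially also of the form $Y_i=\tilde g_i(X_i,X_{\pa_i},Y_{\pa_i},U_i)$ by extending $g_i$ to ignore the extra coordinates; hence Definition~\ref{def:causal} gives $\Pi^{\rm bc}_{G'}(\mu,\nu)\subseteq\Pi^{\rm bc}_G(\mu,\nu)$ and therefore $W_{G,p}\le W_{G',p}$. However, the preceding discussion is confused on two points. First, the inclusion you say ``one wants'' mid-paragraph, $\Pi^{\rm bc}_G\subseteq\Pi^{\rm bc}_{G'}$, is the wrong one: infimizing over a larger set gives a smaller value, so that inclusion would yield $W_{G',p}\le W_{G,p}$, the opposite of the claim. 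Second, the route you sketch via Corollary~\ref{cor:main}(ii) does not work: the assertion that conditional independence given the larger set $(X_{\pa_i},Y_{\pa_i})$ implies conditional independence given the smaller set $(X_{\pa'_i},Y_{\pa'_i})$ is false in general --- removing variables from the conditioning set is \emph{not} a semi-graphoid axiom, and weak union/contraction do not produce such an implication. Drop that route entirely and keep only the direct argument from Definition~\ref{def:causal}, as the paper does.
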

\begin{proof}
	(i): If $E' \subseteq E$, one has $\pa'_i \subseteq \pa_i$, $i = 1, \dots, n$, for the 
	corresponding parent sets. Therefore, it can be seen from Definition \ref{def:causal} 
	that $\Pi^{\rm bc}_{G'}(\mu, \nu) \subseteq \Pi^{\rm bc}_G(\mu, \nu)$ for all $\mu, \nu \in \cP(\cX)$,
	which implies (i).
	
	(ii) It is clear that $W_{G,p}$ is symmetric and non-negative. 
	Moreover, it follows from
	Proposition \ref{prop:basic_prop}.(i) that for 
	all $\mu, \nu \in \cP_{G,p}$, there exists a coupling $\pi \in \Pi^{\rm bc}_G(\mu, \nu)$.
	Therefore, one has for any $x_0 \in \cX$, 
	\beas
	W^p_G(\mu, \nu) &\le& \int_{\cX \times \cX} 
	(d_{\cX}(x,y))^p \pi(dx, dy) \le \int_{\cX \times \cX} 
	(d_{\cX}(x,x_0) + d_{\cX}(y,x_0) )^p \pi(dx, dy) \\
	&\le& 2^{p-1} \brak{ \int_{\cX} 
		(d_{\cX}(x,x_0))^p \mu(dx) + \int_{\cX} 
		(d_{\cX}(x, x_0))^p \nu(dx)} < \infty.
	\eeas
	Next, note that for $X \sim \mu \in \cP_{G,p}(\cX)$, the distribution of $(X,X)$ belongs to 
	$\Pi^{\rm bc}_{G}(\mu, \mu)$, from which one obtains $W_{G, p}(\mu, \mu) = 0$.
	Finally, since $\Pi^{\rm bc}_G(\mu, \nu) \subseteq \Pi(\mu, \nu)$, one has 
	\[
	W_{G, p}(\mu, \nu) \geq W_p(\mu, \nu) > 0 \quad \mbox{if } \mu \neq \nu,
	\]
	where the last inequality holds since the standard $p$-Wasserstein distance 
	$W_p$ is a metric.
	
	(iii) If $\mu, \nu \in \cP_{G,p}(\cX)$ are finitely supported, choose
	a sequence $(\pi^k)_{k \ge 1}$ in $\Pi^{\rm bc}_G(\mu, \nu)$ such that 
	\be \label{limk}
	\int_{\cX} d_{\cX}(x,y)^p d\pi^k(dx, dy) \to W_{G,p}^p(\mu, \nu) \quad
	\mbox{for } k \to \infty.
	\ee
	Since $\Pi(\mu, \nu)$ is a weakly compact subset of
	$\cP(\cX \times \cX)$, there exists a subsequence, again denoted $(\pi^k)_{k \ge 1}$, 
	which weakly converges to a measure $\pi \in \Pi(\mu, \nu)$.
	By Proposition \ref{prop:basic_prop}.(iii), $\pi$ belongs to $\Pi^{\rm bc}_G(\mu, \nu)$, 
	and, by \eqref{limk}, it minimizes \eqref{WGp}.
\end{proof}

We show in Appendix \ref{sec:counter} below that in general, 
$W_{G,p}$ does not satisfy the triangle inequality.
\subsection{Continuity of average treatment effects with respect to 
$G$-causal Wasserstein distance}
\label{subsec:ate}

The average treatment effect is a measure of the causal effect of a treatment or intervention (cf.~\citep{rosenbaum1983central}). 
In the following we show that under suitable assumptions,
it is Lipschitz continuous with respect to $W_{G,1}$ 
under a change of the underlying probability model while it is not continuous 
with respect to the standard Wasserstein distance. I.e., we are interested in the mapping from probability measures to average treatment effect. An important implication of this continuity result is that 
one can obtain bounds on the error resulting from estimating average treatment
effects from data by controlling the $G$-causal Wasserstein estimation error. For recent 
studies on how average treatment effects can be estimated from data, see e.g.
\citep{savje2021average, huang2022robust}. 

Let us assume there exist two indices $j,k \in V = \{1, \dots, n\}$ such that $j < k$, 
$\cX_j = \{0, 1\}$ and $\cX_k$ is a compact subset of $\bR$. 
$X_j$ indicates whether a treatment is applied 
or not, and $X_k$ describes a resulting outcome. Suppose the metric $d_{\cX}$ on 
$\cX = \cX_1 \times \dots \times \cX_n$ is 
given by $d_{\cX}(x,y) := \sum_{i=1}^n d_{\cX_i}(x_i, y_i)$, where 
$d_{\cX_i}(x_i, y_i) = |x_i - y_i|$ for $i = j, k$.
Using Pearl's do-notation (cf. the back-door adjustment 
\citep[Theorem 3.3.2]{pearl2009causality} applied with $\pa_j$),
the average treatment effect under a model $\mu \in \cP_G(\cX)$ is given by
\be \label{ate}
\begin{aligned}
	\psi^\mu &:= \int_{\cX_k} x_k \,\mu(dx_k \mid \Do(x_j = 1)) - \int_{\cX_k}
	x_k \, \mu(dx_k \mid \Do(x_j = 0)) \\
	&= \int_{\cX_{\pa_i}} \int_{\cX_k} x_k \,\mu(dx_k \mid x_j = 1, x_{\pa_j}) \, \mu(dx_{\pa_j}) 
	-\int_{\cX_{\pa_i}} \int_{\cX_k} x_k \,\mu(dx_k \mid x_j = 0, x_{\pa_j}) \, \mu(dx_{\pa_j}),
\end{aligned}
\ee
where for the purposes of this paper, the second line 
serves as definition of the do-notation in the first line.
In the following, we fix a constant $\delta > 0$ and consider the set 
$\mathcal{P}_{G}^{\delta, j}(\cX)$ of models $\mu \in \mathcal{P}_G(\cX)$ 
such that the propensity score $\mu(x_j = 1 \mid x_{\pa_j})$ satisfies 
\[
\delta \le \mu(x_j = 1 \mid x_{\pa_j}) \le 1 - \delta 
\quad \mbox{for $\mu$-almost all $x_{\pa_j}$}.
\]
Then the following holds.

\begin{proposition}\label{prop:ate}
	Let $G= (V,E)$ be a sorted DAG, $\delta > 0$ and $j < k$, $\cX_j, \cX_k$, $d_{\cX}$ as above.
	Then there exists a constant $C \ge 0$ such that 
	\[
	\left| \psi^\mu - \psi^\nu \right| \leq C \, W_{G, 1}(\mu, \nu)
	\]
	for all $\mu, \nu \in \mathcal{P}_G^{\delta, j}(\cX)$.	
\end{proposition}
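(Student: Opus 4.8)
The plan is to rewrite $\psi^\mu$ as an inverse-propensity-weighted expectation that depends on $x$ only through $x_j$, $x_k$ and $x_{\pa_j}$, transport this functional along a near-optimal $G$-bicausal coupling, and invoke the joint $G$-compatibility from Corollary~\ref{cor:main} to control the one term that is not directly Lipschitz in $x$. Concretely, set $p^\mu(x_{\pa_j}) := \mu(x_j = 1 \mid x_{\pa_j})$, $q^\mu := 1 - p^\mu$, $M := \sup_{x_k \in \cX_k} |x_k| < \infty$, and
\[
\varphi^\mu(x) := \frac{\mathbbm{1}\{x_j = 1\}\,x_k}{p^\mu(x_{\pa_j})} - \frac{\mathbbm{1}\{x_j = 0\}\,x_k}{q^\mu(x_{\pa_j})},
\]
which, for $\mu \in \cP_G^{\delta, j}(\cX)$, is well defined and bounded by $M/\delta$ $\mu$-a.s.\ thanks to the bound $\delta \le p^\mu \le 1 - \delta$. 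Conditioning first on $X_{\pa_j}$ and then on $X_j$ (using that on $\{X_j = a\}$ the kernel $\bE^\mu[X_k \mid X_j, X_{\pa_j}]$ equals $\bE^\mu[X_k \mid X_j = a, X_{\pa_j}]$, which is a legitimate version by the $\delta$-positivity), the back-door formula in~\eqref{ate} becomes $\psi^\mu = \bE^\mu[\varphi^\mu(X)]$.

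Now fix $\varepsilon > 0$, choose $\pi \in \Pi^{\rm bc}_G(\mu, \nu)$ with $\int_{\cX \times \cX} d_{\cX}(x,y)\,\pi(dx,dy) \le W_{G,1}(\mu, \nu) + \varepsilon$, and let $(X,Y) \sim \pi$. Since $\pi$ has marginals $\mu$ and $\nu$, the identity above gives $\psi^\mu - \psi^\nu = \bE^\pi[\varphi^\mu(X) - \varphi^\nu(Y)]$. Splitting $\varphi^\mu(X) - \varphi^\nu(Y)$ into its $\{X_j=1\}$ and $\{X_j=0\}$ contributions and, in each, adding and subtracting $\mathbbm{1}\{Y_j=1\}X_k/p^\mu(X_{\pa_j})$ and $\mathbbm{1}\{Y_j=1\}Y_k/p^\mu(X_{\pa_j})$ (resp.\ the analogues with $\mathbbm{1}\{Y_j=0\}$ and $q^\mu$), the bounds $\delta \le p^\mu, p^\nu, q^\mu, q^\nu \le 1-\delta$, $|X_k|, |Y_k| \le M$ and $|q^\mu - q^\nu| = |p^\mu - p^\nu|$ yield
\[
\bigl|\varphi^\mu(X) - \varphi^\nu(Y)\bigr| \le \frac{2M}{\delta}\,\mathbbm{1}\{X_j \ne Y_j\} + \frac{2}{\delta}\,|X_k - Y_k| + \frac{2M}{\delta^2}\,\bigl|p^\mu(X_{\pa_j}) - p^\nu(Y_{\pa_j})\bigr|.
\]
Since $\cX_j = \{0,1\}$ and $d_{\cX_i}(x_i,y_i) = |x_i - y_i|$ for $i = j,k$, we have $\mathbbm{1}\{X_j \ne Y_j\} = d_{\cX_j}(X_j,Y_j) \le d_{\cX}(X,Y)$ and $|X_k - Y_k| = d_{\cX_k}(X_k,Y_k) \le d_{\cX}(X,Y)$, so only the propensity-score discrepancy remains to be estimated.

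For that, recall from Corollary~\ref{cor:main} that, being $G$-bicausal, $\pi$ disintegrates as $\bigotimes_{i=1}^n \pi(dx_i, dy_i \mid x_{\pa_i}, y_{\pa_i})$ with $\pi(dx_j, dy_j \mid x_{\pa_j}, y_{\pa_j}) \in \Pi\bigl(\mu(dx_j \mid x_{\pa_j}),\, \nu(dy_j \mid y_{\pa_j})\bigr)$; as $\cX_j = \{0,1\}$, these marginals are Bernoulli laws with means $p^\mu(x_{\pa_j})$ and $p^\nu(y_{\pa_j})$. Consequently the conditional law of $(X_j, Y_j)$ given $(X_{\pa_j}, Y_{\pa_j})$ is this coupling, so, using $\mathbbm{1}\{X_j \ne Y_j\} = |X_j - Y_j|$,
\[
\bE^\pi\!\bigl[\mathbbm{1}\{X_j \ne Y_j\} \mid X_{\pa_j}, Y_{\pa_j}\bigr] \ge \bigl|\bE^\pi[X_j - Y_j \mid X_{\pa_j}, Y_{\pa_j}]\bigr| = \bigl|p^\mu(X_{\pa_j}) - p^\nu(Y_{\pa_j})\bigr|,
\]
and taking expectations gives $\bE^\pi|p^\mu(X_{\pa_j}) - p^\nu(Y_{\pa_j})| \le \bE^\pi[\mathbbm{1}\{X_j \ne Y_j\}] \le \bE^\pi[d_{\cX}(X,Y)]$. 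Inserting this and the two pointwise bounds into the previous display,
\[
\bigl|\psi^\mu - \psi^\nu\bigr| \le \Bigl(\frac{2M}{\delta} + \frac{2}{\delta} + \frac{2M}{\delta^2}\Bigr)\bE^\pi[d_{\cX}(X,Y)] \le \Bigl(\frac{2M}{\delta} + \frac{2}{\delta} + \frac{2M}{\delta^2}\Bigr)\bigl(W_{G,1}(\mu,\nu) + \varepsilon\bigr),
\]
and letting $\varepsilon \downarrow 0$ yields the claim with $C := 2(M+1)/\delta + 2M/\delta^2$.

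The decisive step is the last one: it is the only place where the causal structure of the coupling enters, and it is indispensable. For an arbitrary $\pi \in \Pi(\mu, \nu)$ there is no reason for the conditional coupling at index $j$ to have $x_j$-marginal $\mu(dx_j \mid x_{\pa_j})$ and $y_j$-marginal $\nu(dy_j \mid y_{\pa_j})$, so the two propensity scores cannot be tied together through the transport cost at all; this is exactly why the analogous Lipschitz bound fails for the standard Wasserstein distance. The remaining ingredients are routine: the reduction to the inverse-propensity form uses only the elementary identity for conditional expectations recalled above together with the $\delta$-positivity, and the measurability in $(x_{\pa_j}, y_{\pa_j})$ of the conditional coupling follows from the disintegration theorem already invoked in Corollary~\ref{cor:main}.
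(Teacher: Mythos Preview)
Your proof is correct and follows essentially the same route as the paper's: both rewrite the interventional expectations via inverse-propensity weighting, pick a near-optimal $G$-bicausal coupling, telescope the integrand, and then invoke Corollary~\ref{cor:main} to identify the conditional marginals of $(X_j,Y_j)$ given $(X_{\pa_j},Y_{\pa_j})$ with $\mu(dx_j\mid x_{\pa_j})$ and $\nu(dy_j\mid y_{\pa_j})$, which is precisely what makes $\bE^\pi|p^\mu(X_{\pa_j})-p^\nu(Y_{\pa_j})|\le \bE^\pi|X_j-Y_j|$ go through. The only cosmetic difference is that you package both do-terms into the single functional $\varphi^\mu$ and telescope once, whereas the paper treats $\Do(x_j=1)$ and $\Do(x_j=0)$ separately; this yields slightly different but equivalent explicit constants.
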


\begin{proof}
	Note that 
	\begin{align*}
	&\int_{\cX_k} x_k \,\mu(dx_k \mid \Do(x_j = 1)) \\
	&= \int_{\cX_{\pa_j}} \int_{\cX_k}  x_k  \, \mu(dx_k \mid x_j = 1, 
	x_{\pa_j}) \frac{\mu(x_j = 1 \mid x_{\pa_j})}{\mu(x_j = 1 \mid x_{\pa_j})} \, 
	\mu(dx_{\pa_j}) \\
	&= \int_{\cX_{\pa_j}} \int_{\cX_j} \int_{\cX_k} 
	\frac{x_k 1_{\crl{x_j = 1}} }{\mu(x_j = 1 \mid x_{\pa_j})}   \,
	\mu(dx_k \mid x_j, x_{\pa_j}) \mu(dx_j \mid x_{\pa_j}) \, \mu(dx_{\pa_j}) \\
	&= \int_{\cX_{\pa_j}} \int_{\cX_j} \int_{\cX_k} \frac{x_k 1_{\crl{x_j = 1}}}{\mu(x_j = 1 
		\mid x_{\pa_j})} \,\mu(dx_k, dx_j, dx_{\pa_j}) \\
	&= \int_{\cX}  \frac{x_k 1_{\crl{x_j = 1}}}{\mu(x_j = 1 \mid x_{\pa_j})} \,\mu(dx)\\
	&= \int_{\cX} f g h \, \mu(dx),
	\end{align*}
	where $f(x) = 1_{\{x_j = 1\}} = x_j$, $g(x) = x_k$ and  
	$h(x) = \mu(x_j = 1 \mid x_{\pa_j})^{-1}$. 
	Moreover, one has
	\beas
	&& f(x)g(x)h(x) - f(y)g(y)h(y)\\
	&=& [f(x) - f(y)]\, g(x)h(x) + f(y) [g(x) - g(y)] h(x)) + f(y) g(y) [h(x) - h(y)].
	\eeas
	Since by assumption, $\cX_k$ is compact and $\mu(x_j = 1\mid x_{\pa_j}) \ge \delta$,
	there exists a constant $K \ge 0$ such that 
	\[
	|g(x) h(x)| \le K, \quad |f(y)h(x)| \le K \quad \mbox{and} \quad 
	|f(y) g(y)| \le K \quad \mbox{for all} \quad x,y \in \cX.
	\]
	Let $\varepsilon > 0$ and choose a coupling $\pi \in \Pi^{\rm bc}_G(\mu, \nu)$ such that
	\[
	\int_{\cX^2} d_{\cX}(x,y) \pi(dx, dy) \le W_{G,1}(\mu, \nu) + \varepsilon.
	\]
	Then, 
	\beas
	&&\left| \int_{\cX_k} x_k \,\mu(dx_k \mid \Do(x_j = 1)) 
	- \int_{\cX_k} x_k \,\nu(dx_k \mid \Do(x_j = 1)) \right| 
	= \left| \int_{\cX} fgh \,d\mu - \int_{\cX} fgh \,d\nu \right| \\
	&=& \left| \int \edg{f(x)g(x)h(x) - f(y)g(y) h(y)} \,\pi(dx, dy)\right| \\
	&\leq & K \int_{\cX} \brak{|f(x) - f(y)| + |g(x) - g(y)| 
		+ |h(x) - h(y)|} \,\pi(dx, dy)
	\eeas
	It is clear that $\int_{\cX} \brak{|f(x) - f(y)| + |g(x) - g(y)|} \, \pi(dx, dy) \leq W_{G, 1}(\mu, \nu) 
	+ \varepsilon$. Moreover, by the lower bound on the propensity score and since
	\[
	\left| \frac{1}{a} - \frac{1}{b}\right| \leq \frac{1}{\delta^2} \, |a-b| \text{ for all } a, b \ge \delta,
	\]
	we obtain from Corollary \ref{cor:main} that
	\beas
	&& \delta^2 \int_{\cX^2} |h(x) - h(y)| \,\pi(dx, dy)
	\leq \int_{\cX^2} \left| \mu(x_j = 1 \mid x_{\pa_j}) - 
	\nu(y_j = 1 \mid y_{\pa_j}) \right| \,\pi(dx, dy) \\
	&=& \int_{\cX^2} \left| \pi(x_j = 1 \mid x_{\pa_j}, y_{\pa_j}) - \pi(y_j = 1 \mid x_{\pa_j}, 
	y_{\pa_j}) \right| \pi(dx, dy) \\
	&=& \int_{\cX^2_{\pa_j}} \left| \int_{\cX_j^2} (x_j - y_j) \pi(dx_j, dy_j \mid y_{\pa_j}, y_{\pa_j}) \right| 
	\pi(dx_{\pa_j}, dy_{\pa_j})\\
	&\leq& \int_{\cX^2} |x_j - y_j| \, \pi(dx, dy) \leq W_{G, 1}(\mu, \nu) + \varepsilon.
	\eeas
	Since $\varepsilon > 0$ was arbitrary, this shows that
	\be \label{Do1}
	\left| \int_{\cX_k} x_k \,\mu(dx_k \mid \Do(x_j = 1)) 
	- \int_{\cX_k} x_k \,\nu(dx_k \mid \Do(x_j = 1)) \right| 
	\le K \brak{1 + \frac{1}{\delta^2}} W_{G,1}(\mu, \nu).
	\ee
	Analogously, one deduces from the upper bound on the propensity score that
	\be \label{Do0}
	\left| \int_{\cX_k} x_k \,\mu(dx_k \mid \Do(x_j = 0)) 
	- \int_{\cX_k} x_k \,\nu(dx_k \mid \Do(x_j = 0)) \right| 
	\le K \brak{1 + \frac{1}{\delta^2}} W_{G,1}(\mu, \nu).
	\ee
	Now, the proposition follows from a combination of \eqref{Do1} and \eqref{Do0}.
\end{proof}

\begin{remark}
	It is easy to see that Proposition \ref{prop:ate} does not hold with the 
	standard $1$-Wasserstein distance $W_1$ instead of $W_{G,1}$ since 
	the topology generated by $W_1$ is not fine enough. Indeed, measures that are 
	close with respect to $W_1$ may have completely different 
	transition probabilities $\mu(dx_k \mid x_j = 1, x_{\pa_j})$, which play 
	a crucial role in the definition of the average treatment effect $\psi^\mu$; 
	see \eqref{ate}. 
	
	To construct a simple counterexample, we consider the temporal graph $G$ with three nodes (i.e., $E= \{(1, 2), (1, 3), (2, 3)\}$), where the second variable is the treatment variable and the third the output variable. For $p, H \in (0, 1)$, $q = (1-p), L = (1-H)$ and $\varepsilon > 0$, let $\mu^{\varepsilon}$ be given by
	\begin{align*}
	\mu^{\varepsilon} =  & \; pH^2 \delta_{(\varepsilon, 1, 1)} + pHL \delta_{(\varepsilon, 1, 0)} + pLH \delta_{(\varepsilon, 0, 1)} + pL^2 \delta_{(\varepsilon, 0, 0)} \\
	&+ qH^2 \delta_{(0, 0, 0)} + qHL \delta_{(0, 0, 1)} + qLH \delta_{(0, 1, 0)} + qL^2 \delta_{(0, 1, 1)}
	\end{align*} 
	Under $\mu^\varepsilon$, treatment and outcome variable are independent given the first variable. In other words, the treatment has no direct effect on the outcome and one readily finds $\psi^{\mu^{\varepsilon}} = 0$. On the other hand, consider the weak limit of $\mu^\varepsilon$ for $\varepsilon \rightarrow 0$, which is given by
	\[
	\mu^0 = (pH^2 + qL^2) \delta_{(0, 1, 1)} + (pL^2 + qH^2) \delta_{(0, 0, 0)} + LH \delta_{(0, 1, 0)} + LH \delta_{(0, 0, 1)}.
	\]
	For instance, setting $p=q=\frac{1}{2}$ and $H = 0.9, L=0.1$, we see that now there is a strong positive dependence between treatment and outcome variable, even conditional on the first variable (since conditioning on the first variable is now irrelevant). Indeed, with these values, one finds that $\psi^{\mu^0} = 0.64$. 
	
	The above example may intuitively be framed as modeling the question whether "drinking wine causes health benefits", where the first variable is the common cause "being wealthy or not", but the common cause is no longer included in the information structure for $\varepsilon = 0$.
\end{remark}

\subsection{Perturbation of structural causal models}
\label{subsec:closeclose}

We know from Proposition \ref{prop:wassersteinprop}.(i) that the topology 
generated by $W_{G,p}$ is the finer the fewer edges the graph $G$ has.
In Section \ref{subsec:ate} we saw that average treatment effects are continuous 
with respect to $W_{G,1}$ but not with respect to the standard $1$-Wasserstein distance $W_1$, 
which corresponds to a fully connected graph. In this section we show that 
the topology generated by $W_{G,1}$ is not too fine for practical purposes by 
proving that small perturbations of structural causal modes lead to distributions that 
are close with respect to $W_{G,1}$.

\begin{proposition}
	Let $G = (V,E)$ be a sorted DAG and consider for each $i=1, \dots, n$,
	a metric $d_{\cX_i}$ on $\cX_i$ generating the topology on $\cX_i$.
	Define the metric $d_{\cX}$ on 
	$\cX = \cX_1 \times \dots \times \cX_n$ by 
	$d_{\cX}(x, y) = \sum_{i=1}^n d_{\cX_i}(x_i, y_i)$ and consider
	$\cX$-valued random variables 
	$X \sim \mu \in \mathcal{P}_G(\cX)$ and $Y \sim \nu \in \mathcal{P}_G(\cX)$
	such that for all $i =1, \dots, n$,
	\[
	X_i = f_i(X_{\pa_i}, U_i) \quad \mbox{and} \quad
	Y_i = g_i(Y_{\pa_i}, V_i)
	\]
	for measurable functions $f_i, g_i  \colon \cX_{\pa_i} \times \bR \to \cX_i$
	and $\bR$-valued random variables $U_i, V_i$ satisfying
	\begin{itemize}
		\item[{\rm (i)}] $f_i$ is $L_i$-Lipschitz for a constant $L_i \ge 0$,
		\item[{\rm (ii)}] $U_1, \dots, U_n$ are independent and
		\item[{\rm (iii)}] $V_1, \dots, V_n$ are independent.
	\end{itemize}
	Then there exists a constant $C \ge 0$ depending on $G$ and
	$L_1, \dots, L_n$ such that
	\be \label{WGeps}
	W_{G, 1}(\mu, \nu) \leq C \sum_{i=1}^n \big\{\|f_i - g_i \|_{\infty} 
	+ W_1(\cL(U_i), \cL(V_i)) \big\},
	\ee
	where $\cL(U_i)$ and $\cL(V_i)$ are the distributions of $U_i$ and $V_i$, respectively.
\end{proposition}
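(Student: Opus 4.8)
The plan is to construct an explicit $G$-bicausal coupling $\pi\in\Pi_G^{\rm bc}(\mu,\nu)$ by coupling the noise variables $U_i$ and $V_i$ optimally and then pushing the coupled noise through the two structural equations simultaneously. Concretely, first I would pass to a probability space carrying independent pairs $(\bar U_i,\bar V_i)$, $i=1,\dots,n$, such that each pair $(\bar U_i,\bar V_i)$ realizes an optimal $W_1$-coupling of $\cL(U_i)$ and $\cL(V_i)$, i.e.\ $\bE|\bar U_i-\bar V_i|=W_1(\cL(U_i),\cL(V_i))$, and such that the whole family $\{(\bar U_i,\bar V_i)\}_{i=1}^n$ is independent across $i$. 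Then define recursively $\bar X_i=f_i(\bar X_{\pa_i},\bar U_i)$ and $\bar Y_i=g_i(\bar Y_{\pa_i},\bar V_i)$ for $i=1,\dots,n$, and let $\pi=\cL(\bar X,\bar Y)$. Because $\bar U_1,\dots,\bar U_n$ are independent and $\bar X$ depends only on the $\bar U$'s (and similarly for $\bar Y$ and $\bar V$), $\pi$ is indeed in $\Pi(\mu,\nu)$; and one checks the characterization in Corollary~\ref{cor:main}(ii) (or Theorem~\ref{thm:main}(iii) in each direction) holds because the recursive structure makes $(\bar X_i,\bar Y_i)$ a measurable function of $(\bar X_{\pa_i},\bar Y_{\pa_i},\bar U_i,\bar V_i)$ with $(\bar U_i,\bar V_i)$ independent of everything with strictly smaller index. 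So $\pi$ is $G$-bicausal and it remains to estimate $\int d_{\cX}(x,y)\,\pi(dx,dy)=\sum_{i=1}^n\bE\,d_{\cX_i}(\bar X_i,\bar Y_i)$.

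For the estimate, I would bound $\bE\,d_{\cX_i}(\bar X_i,\bar Y_i)$ by induction along the topological order of the sorted DAG. Write
\[
d_{\cX_i}(\bar X_i,\bar Y_i)=d_{\cX_i}\big(f_i(\bar X_{\pa_i},\bar U_i),\,g_i(\bar Y_{\pa_i},\bar V_i)\big)
\le d_{\cX_i}\big(f_i(\bar X_{\pa_i},\bar U_i),\,f_i(\bar Y_{\pa_i},\bar V_i)\big)+d_{\cX_i}\big(f_i(\bar Y_{\pa_i},\bar V_i),\,g_i(\bar Y_{\pa_i},\bar V_i)\big).
\]
The second term is at most $\|f_i-g_i\|_\infty$. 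For the first, $L_i$-Lipschitz continuity of $f_i$ with respect to the product metric $\sum_{j\in\pa_i}d_{\cX_j}(\cdot,\cdot)+|\cdot-\cdot|$ on $\cX_{\pa_i}\times\bR$ gives a bound of $L_i\big(\sum_{j\in\pa_i}d_{\cX_j}(\bar X_j,\bar Y_j)+|\bar U_i-\bar V_i|\big)$; taking expectations and using $\bE|\bar U_i-\bar V_i|=W_1(\cL(U_i),\cL(V_i))$ yields
\[
\bE\,d_{\cX_i}(\bar X_i,\bar Y_i)\le \|f_i-g_i\|_\infty+L_i\,W_1(\cL(U_i),\cL(V_i))+L_i\sum_{j\in\pa_i}\bE\,d_{\cX_j}(\bar X_j,\bar Y_j).
\]
Since the graph is a sorted DAG, $\pa_i\subseteq\{1,\dots,i-1\}$, so this is a genuine recursion with no circularity, and unrolling it expresses each $\bE\,d_{\cX_i}(\bar X_i,\bar Y_i)$ as a finite sum over directed paths in $G$ of products of the $L_j$'s times the per-node errors $\|f_j-g_j\|_\infty+L_j W_1(\cL(U_j),\cL(V_j))$ (with the convention that a Lipschitz factor is dropped at the path endpoint). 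Collecting terms and letting $C$ be the maximum over $i$ of the sum of all such path-weights (a finite constant depending only on $G$ and $L_1,\dots,L_n$) gives $\sum_i\bE\,d_{\cX_i}(\bar X_i,\bar Y_i)\le C\sum_{i=1}^n\{\|f_i-g_i\|_\infty+W_1(\cL(U_i),\cL(V_i))\}$, which upper-bounds $W_{G,1}(\mu,\nu)$ and hence proves \eqref{WGeps}.

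I would be a little careful about two technical points. First, the existence of the independent family of optimal noise couplings: for each $i$ an optimal $W_1$-coupling $\rho_i\in\Pi(\cL(U_i),\cL(V_i))$ exists by standard optimal transport theory on $\bR$ (or one may take the monotone/quantile coupling), and one then forms the product space $\prod_i(\bR\times\bR,\rho_i)$ to get the desired independent family; this requires only $\cL(U_i),\cL(V_i)\in\cP_1(\bR)$, which is implicit since otherwise the right-hand side of \eqref{WGeps} is infinite and there is nothing to prove. Second, one should note that $f_i$ being ``$L_i$-Lipschitz'' is interpreted with respect to the sum metric on $\cX_{\pa_i}\times\bR$; if instead a different product metric is intended the same argument goes through after adjusting constants, so this is harmless. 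The main obstacle — really the only nonroutine step — is verifying cleanly that the coupling $\pi$ constructed from the coupled, cross-independent noises actually satisfies the bicausality criterion of Corollary~\ref{cor:main}; everything else is the Lipschitz recursion, which is bookkeeping along the DAG.
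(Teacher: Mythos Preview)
Your proposal is correct and follows essentially the same approach as the paper: construct an explicit $G$-bicausal coupling by taking independent optimal $W_1$-couplings of the noise pairs, push them through the two structural equations recursively, verify bicausality via Corollary~\ref{cor:main}, and bound $\bE\,d_{\cX_i}(\bar X_i,\bar Y_i)$ by the same triangle-inequality-plus-Lipschitz induction along the sorted DAG. The paper's presentation differs only cosmetically, giving an explicit recursion $C_i=\max\{L_i\sum_{j\in\pa_i}C_j,\,L_i,\,1\}$ for the constants rather than your path-weight description, but the content is the same.
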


\begin{proof}
	It is well-known that there exist independent pairs of random variables
	$(\tilde{U}_i, \tilde{V}_i)$ such that
	$\cL(\tilde{U}_i) = \cL(U_i)$, $\cL(\tilde{V}_i) = \cL(V_i)$ and
	\[
	\bE \, |\tilde{U}_i - \tilde{V}_i| = W_1(\cL(U_i), \cL(V_i))
	\quad \mbox{for all } i = 1, \dots, n;
	\]
	see e.g. \citep{villani2009optimal}. 
	Let us define iteratively
	\be \label{tildeXY}
	(\tilde{X}_i, \tilde{Y}_i) = (f_i(\tilde{X}_{\pa_i}, \tilde{U}_i), 
	g_i(\tilde{Y}_{\pa_i}, \tilde{V}_i)), \quad 
	i = 1, \dots, n.
	\ee
	Then, $\tilde{X} \sim X$ and $\tilde{Y} \sim Y$. Moreover, it can be seen from 
	\eqref{tildeXY} that 		\[
	(\tilde{X}_i, \tilde{Y}_i) \ind_{\tilde{X}_{\pa_i}, \tilde{Y}_{\pa_i}} 
	(\tilde{X}_{1:i-1}, \tilde{Y}_{1:i-1}), \quad
	\tilde{X}_i \ind_{\tilde{X}_{\pa_i}} \tilde{Y}_{\pa_i} \quad \mbox{and} \quad
	\tilde{Y}_i \ind_{\tilde{Y}_{\pa_i}} \tilde{X}_{\pa_i}
	\quad \mbox{for all } i =2, \dots, n.
	\]
	So it follows from Corollary \ref{cor:main} that the distribution $\pi$ of
	$(\tilde{X}, \tilde{Y})$ belongs to $\Pi^{\rm bc}_G(\mu, \nu)$.
	
	If we can show that 
	\be \label{Ci}
	\bE \, d_{\cX_i}(\tilde{X}_i, \tilde{Y}_i)
	\leq C_i \sum_{j=1}^i \big\{ \|f_j - g_j \|_{\infty} 
	+ W_1(\cL(U_j), \cL(V_j)) \big\}
	\ee
	for constants $C_i \ge 0$ depending on $G$ and $L_1, \dots, L_i$,
	we obtain \eqref{WGeps} with $C = \sum_{i=1}^n C_i$. 	
	We prove \eqref{Ci} by induction over $i = 1, \dots, n$. First, note that 
	\beas
	&& \bE \, d_{\cX_1}(\tilde{X}_1, \tilde{Y}_1) \leq 
	\bE \edg{d_{\cX_1}(f_1(\tilde{U}_1), f_1(\tilde{V}_1)) 
		+ d_{\cX_1} (f_1(\tilde{V}_1), g_1(\tilde{V}_1)}\\
	&\le& L_1 \, \bE |\tilde{U}_1 - \tilde{V}_1| 
	+ \|f_1 - g_1\|_{\infty} 
	= L_1 \, W_1(\cL(U_1), \cL(V_1))
	+ \|f_1 - g_1\|_{\infty},
	\eeas
	showing that for $i = 1$, \eqref{Ci} holds with $C_1 = \max \crl{L_1,1}$.
	For $i \ge 2$, assuming that \eqref{Ci} holds for all $j < i$, one obtains
	\beas
	&& \bE \, d_{\cX_i}(\tilde{X}_i, \tilde{Y}_i) \leq 
	\bE \edg{d_{\cX_i}(f_i(\tilde{X}_{\pa_i}, \tilde{U}_i), f_i(\tilde{Y}_{\pa_i}, \tilde{V}_i)) 
		+ d_{\cX_i} (f_i(\tilde{Y}_{\pa_i}, \tilde{V}_i), g_i(\tilde{Y}_{\pa_i}, \tilde{V}_i)} \\
	&\leq& L_i\, \bE \edg{\sum_{j \in \pa_i} d_{\cX_j}(\tilde{X}_j, \tilde{Y}_j) + |\tilde{U}_i - \tilde{V}_i| }
	+ \|f_i - g_i\|_\infty\\
	&\le& L_i \sum_{j \in \pa_i} C_j \sum_{k=1}^{j} \big\{ \|f_k - g_k \|_{\infty} 
	+ W_1(\cL(U_k), \cL(V_k)) \big\}
	+ L_i  W_1(\cL(U_i), \cL(V_i)) + \|f_i - g_i\|_{\infty}.
	\eeas
	So \eqref{Ci} holds with 
	\[
	C_i = \max \crl{L_i \sum_{j \in \pa_i} C_j \, ,\, L_i \, , \, 1},
	\quad i = 1, \dots, n,
	\] 
	and the proposition follows.
\end{proof}

\subsection{$G$-causal Wasserstein interpolation}\label{subsec:geodesic} 

In this section, we assume that in addition to a metric $d_{\cX}$ compatible 
with the Polish topology, $\cX$ is endowed with a vector space structure 
so that the vector space operations are continuous.

Note that for a sorted DAG $G = (V,E)$, $p \in [1, \infty)$ and given measures 
$\mu, \nu \in \cP_{G,p}(\cX)$,
there exists a sequence $(\pi^k)_{k \ge 1}$ of 
couplings in $\Pi^{\rm bc}_{G}(\mu, \nu)$ such that
\[
\int_{\cX \times \cX} d_{\cX}(x,y)^p d\pi^k(dx, dy) \to W_{G,p}^p(\mu, \nu) \quad
\mbox{for } k \to \infty.
\]
Since $\Pi^{\rm bc}_{G}(\mu, \nu) \subseteq \Pi(\mu, \nu)$, and the latter
is a weakly compact subset of
$\cP(\cX \times \cX)$, there exists a subsequence, again denoted $(\pi^k)_{k \ge 1}$, 
which weakly converges to a measure $\pi \in \Pi(\mu, \nu)$. This leads us to the 
following notion of $G$-causal interpolation, building on the concept of standard Wasserstein geodesics (cf.~\citep{villani2009optimal}) and more recent work studying causal Wasserstein interpolation between distributions of stochastic processes; see, e.g. ~\citep[Section 5.4]{bartl2021wasserstein}). We aim for a $G$-causal interpolation concept which, under suitable assumptions, ensures  that the interpolation between two $G$-compatible distributions preserves $G$-compatibility.

\begin{definition}\label{def:interpol}
Let $G = (V,E)$ be a sorted DAG and $p \in [1, \infty)$.
For given $\mu, \nu \in \cP_{G,p}(\cX)$, let $(\pi^k)_{k \ge 1}$ 
be a sequence of couplings in $\Pi^{\rm bc}_{G}(\mu, \nu)$ such that 
	\[
	\int_{\cX \times \cX} d_{\cX}(x,y)^p d\pi^k(dx, dy) \to W_{G,p}^p(\mu, \nu) \quad 
	\mbox{for } k \to \infty,
	\]
	and $\pi^k$ weakly converges to some $\pi \in \Pi(\mu, \nu)$. For $\lambda \in [0,1]$, 
	denote by $\kappa_{\lambda}$ the distribution of \linebreak
	$(1-\lambda) X + \lambda Y$, where $(X, Y) \sim \pi$. Then we call
	$(\kappa_\lambda)_{\lambda \in [0, 1]}$ a $W_{G, p}$-interpolation between $\mu$ and $\nu$.
\end{definition}

The measure $\pi$ in Definition \ref{def:interpol} 
is a limit of measures $\pi^k \in \Pi^{\rm bc}_{G}(\mu, \nu)$.
But in general, it might not belong to $\Pi^{\rm bc}_{G}(\mu, \nu)$ 
(see Proposition \ref{prop:basic_prop}.(iv)), and even if 
it does, the measures $\kappa_{\lambda}$ are not necessarily $G$-compatible
(see Example \ref{ex:Markov} below). Nevertheless, there are situations where the interpolations $\kappa_\lambda$ are $G$-compatible, which we study below.

The following result gives conditions under which the distribution of a convex 
combination of the form $(1-\lambda)X + \lambda Y$ is $G$-compatible. 

\begin{proposition} \label{prop:Gcomp}
	Let $G = (V,E)$ be a sorted DAG and $\mu, \nu \in \cP_{G}(\cX)$. 
	Assume $(X,Y) \sim \pi$ for a distribution $\pi \in \Pi^{\rm bc}_G(\mu, \nu)$. 
	Then, for any $\lambda \in [0,1]$, the distribution of $(X, Y, (1-\lambda)X + \lambda Y)$ is 
	$G$-compatible. Moreover, if there exists a measurable set $A \subseteq \cX \times \cX$
	with $\pi(A) = 1$ such that for each $i = 2, \dots, n$, the map
	\[
	h_{\lambda, i} : \cX_{\pa_i} \times \cX_{\pa_i} \rightarrow \cX_{\pa_i}, 
	~(x_{\pa_i}, y_{\pa_i}) \mapsto (1-\lambda) x_{\pa_i} + \lambda y_{\pa_i}
	\] 
	is injective on the projection of $A$ to $\cX_{\pa_i} \times \cX_{\pa_i}$,
	then the distribution of $\lambda X + (1-\lambda)Y$ is $G$-compatible.
\end{proposition}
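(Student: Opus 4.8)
The plan is to verify the characterization of $G$-compatibility given by Corollary \ref{cor:main}.(ii) / Definition \ref{def:Gcomp}.(ii) for the enlarged vector of variables. Write $Z = (Z_1, \dots, Z_n)$ with $Z_i = (1-\lambda)X_i + \lambda Y_i$, and regard the triple $W := (X, Y, Z)$ as a random vector indexed by $V$ with $i$-th component $W_i = (X_i, Y_i, Z_i)$ living in $\cX_i \times \cX_i \times \cX_i$. Since $\pi \in \Pi^{\rm bc}_G(\mu, \nu)$, Corollary \ref{cor:main}.(ii) gives $(X_i, Y_i) \ind_{X_{\pa_i}, Y_{\pa_i}} (X_{1:i-1}, Y_{1:i-1})$ for all $i = 2, \dots, n$. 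Because $Z_j$ is a deterministic function of $(X_j, Y_j)$ for each $j$, appending the $Z$-coordinates does not change the relevant $\sigma$-algebras: $\sigma(W_{\pa_i}) = \sigma(X_{\pa_i}, Y_{\pa_i})$ and $\sigma(W_{1:i-1}) = \sigma(X_{1:i-1}, Y_{1:i-1})$. Hence the conditional independence relation for $W$ that characterizes $G$-compatibility (i.e. $W_i \ind_{W_{\pa_i}} W_{1:i-1}$) follows directly, and by Definition \ref{def:Gcomp}.(ii) the distribution of $(X, Y, Z)$ is $G$-compatible. I would spell this out via the disintegration form \eqref{mudis}: from $\pi$ jointly $G$-compatible one writes $\pi(d x_i, dy_i \mid x_{\pa_i}, y_{\pa_i})$, and pushing forward along $(x_i, y_i) \mapsto (x_i, y_i, (1-\lambda)x_i + \lambda y_i)$ inside each factor yields the product disintegration for the law of $W$ over the parent sets of $G$.

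For the second assertion I need $G$-compatibility of $\cL(Z)$ alone, i.e. $Z_i \ind_{Z_{\pa_i}} Z_{1:i-1}$. The natural route is to deduce this from the already-established $G$-compatibility of $\cL(X, Y, Z)$ by a suitable marginalization argument, but marginalization of a $G$-compatible law onto a subset of coordinates is not automatic — this is exactly where the injectivity hypothesis enters. The key point: under the assumption that $h_{\lambda, i}(x_{\pa_i}, y_{\pa_i}) = (1-\lambda)x_{\pa_i} + \lambda y_{\pa_i}$ is $\pi_{\pa_i}$-a.s. injective, the map from $(x_{\pa_i}, y_{\pa_i})$ to $z_{\pa_i}$ is (essentially) invertible on a set of full measure, so that conditioning on $Z_{\pa_i}$ is the same as conditioning on $(X_{\pa_i}, Y_{\pa_i})$ in the relevant sense — i.e. $\sigma(Z_{\pa_i})$ and $\sigma(X_{\pa_i}, Y_{\pa_i})$ agree up to $\pi$-null sets along the support. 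More precisely, I would argue that $\cL(X_{\pa_i}, Y_{\pa_i})$-a.s.\ injectivity of $h_{\lambda,i}$ implies there is a measurable left-inverse (by the standard fact that an injective Borel map between Polish spaces has Borel range and Borel inverse on its range, e.g.\ via Lusin–Souslin), so $(X_{\pa_i}, Y_{\pa_i})$ is a measurable function of $Z_{\pa_i}$ up to a null set.

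Given that, I would compute: for $i = 2, \dots, n$,
\[
\cL(Z_i \mid Z_{1:i-1}) = \cL(Z_i \mid Z_{1:i-1}, X_{1:i-1}, Y_{1:i-1})
\]
does \emph{not} hold for free, so instead I work the other way. Using $G$-compatibility of $\cL(X,Y,Z)$, the law of $Z_i$ given $(X_{1:i-1}, Y_{1:i-1}, Z_{1:i-1})$ depends only on $(X_{\pa_i}, Y_{\pa_i}, Z_{\pa_i}) = (X_{\pa_i}, Y_{\pa_i})$ up to the deterministic $Z$-part; but $Z_i = (1-\lambda)X_i + \lambda Y_i$ is itself determined once $(X_i, Y_i)$ is drawn, and $\cL(X_i, Y_i \mid X_{1:i-1}, Y_{1:i-1}) = \cL(X_i, Y_i \mid X_{\pa_i}, Y_{\pa_i})$. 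Hence $\cL(Z_i \mid X_{1:i-1}, Y_{1:i-1}) = \Phi_i(X_{\pa_i}, Y_{\pa_i})$ for a fixed kernel $\Phi_i$. Now, by injectivity, $(X_{\pa_i}, Y_{\pa_i})$ is a measurable function of $Z_{\pa_i}$, so $\Phi_i(X_{\pa_i}, Y_{\pa_i}) = \Psi_i(Z_{\pa_i})$ for a kernel $\Psi_i$; and since $Z_{1:i-1}$ is a function of $(X_{1:i-1}, Y_{1:i-1})$, we get $\cL(Z_i \mid Z_{1:i-1}) = \bE[\cL(Z_i \mid X_{1:i-1}, Y_{1:i-1}) \mid Z_{1:i-1}] = \bE[\Psi_i(Z_{\pa_i}) \mid Z_{1:i-1}] = \Psi_i(Z_{\pa_i})$, because $Z_{\pa_i}$ is $\sigma(Z_{1:i-1})$-measurable. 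This is precisely $Z_i \ind_{Z_{\pa_i}} Z_{1:i-1}$, so $\cL(Z)$ is $G$-compatible.

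The main obstacle I expect is the measure-theoretic handling of the injectivity hypothesis: one has to pass cleanly from ``$h_{\lambda, i}$ is $\pi_{\pa_i}$-a.s.\ injective'' to ``$(X_{\pa_i}, Y_{\pa_i})$ is, modulo a $\pi$-null set, a measurable function of $Z_{\pa_i}$,'' which requires invoking the Lusin–Souslin theorem (or Kallenberg's functional representation lemmas) on the appropriate Polish spaces and being careful that the exceptional null sets do not propagate across the inductive computation of the conditional laws. Everything else — rewriting disintegrations, using that $Z_j$ is a deterministic function of $(X_j, Y_j)$ to identify $\sigma$-algebras, and the chain-rule bookkeeping for conditional independence — is routine given Corollary \ref{cor:main} and the stability facts already established.
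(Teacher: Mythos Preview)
Your proposal is correct and follows essentially the same route as the paper: for the first part you use Corollary~\ref{cor:main}.(ii) and the fact that $Z_j$ is a deterministic function of $(X_j,Y_j)$ to identify the relevant $\sigma$-algebras, exactly as the paper does; for the second part both you and the paper reduce the injectivity hypothesis to the statement that conditioning on $Z_{\pa_i}$ equals conditioning on $(X_{\pa_i},Y_{\pa_i})$, and then read off $Z_i \ind_{Z_{\pa_i}} Z_{1:i-1}$ from the already-established joint conditional independence. The only cosmetic difference is that the paper restricts to a full-measure set and asserts $\sigma(X_{\pa_i},Y_{\pa_i}) = \sigma(Z_{\pa_i})$ directly, whereas you spell out the Lusin--Souslin invocation and route the conclusion through a tower-property computation; your version is a bit more explicit about the measure theory but not a different argument.
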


\begin{proof}
	Since $\pi$ is in $\Pi^{\rm bc}_G(\mu, \nu)$, we obtain from Corollary \ref{cor:main} that
	\[
	(X_i, Y_i) \ind_{X_{\pa_i}, Y_{\pa_i}} (X_{1:i-1}, Y_{1:i-1}), 
	\]
	which implies
	\be \label{XYZ}
	(X_i, Y_i, Z_{\lambda, i}) 
	\ind_{X_{\pa_i}, Y_{\pa_i}, Z_{\lambda, \pa_i}} (X_{1:i-1}, Y_{1:i-1}, Z_{\lambda, 1:i-1})
	\ee
	for all $\lambda \in [0,1]$ and $Z_{\lambda} = (1-\lambda) X + \lambda Y$. By
	Definition \ref{def:Gcomp}, this shows that the distribution of $(X,Y, Z_{\lambda})$ 
	is $G$-compatible.
	
	Now, let us assume there exists a measurable set $A \subseteq \cX \times \cX$
	with $\pi(A) = 1$ such that for all $i = 2, \dots, n$, 
	$h_{\lambda, i}$ is injective on the projection of $A$ to $\cX_{\pa_i} \times \cX_{\pa_i}$.
	If $(\Omega, {\cal F}, \bP)$ is the probability space on which
	$(X,Y)$ is defined, we can, without loss of generality, assume 
	that $\Omega = (X,Y)^{-1}(A)$ since restricting $(X,Y)$ to
	$(X,Y)^{-1}(A)$ does not change its distribution.
	But then, $(X_{\pa_i}(\omega), Y_{\pa_i}(\omega)) \mapsto 
	(1-\lambda) X_{\pa_i}(\omega) + \lambda Y_{\pa_i}(\omega)$ 
	is injective for all $\omega \in \Omega$ and $i=2, \dots, n$. It follows that 
	$\sigma(X_{\pa_i}, Y_{\pa_i}) = \sigma(Z_{\lambda, \pa_i})$, which together with 
	\eqref{XYZ} gives $Z_{\lambda, i} \ind_{Z_{\lambda, \pa_i}} Z_{\lambda, 1:i-1}$
	for all $i = 2, \dots, n$. This shows that $Z_{\lambda}$ satisfies condition (ii) 
	of Definition \ref{def:Gcomp} and therefore, is $G$-compatible.
\end{proof}

\begin{corollary} \label{cor:finmarg}
	Let $G = (V,E)$ be a sorted DAG and $\mu, \nu \in \cP_{G}(\cX)$. 
	Assume $(X,Y) \sim \pi$ for a distribution $\pi \in \Pi^{\rm bc}_G(\mu, \nu)$. 
	\begin{itemize}
		\item[{\rm (i)}] If $\mu$ and $\nu$ are finitely supported, the distribution 
		of $(1-\lambda) X + \lambda Y$ is $G$-compatible for all but finitely 
		many $\lambda \in [0,1]$.
		\item[{\rm (ii)}] If $\mu$ and $\nu$ are countably supported, the distribution 
		of $(1-\lambda) X + \lambda Y$ is $G$-compatible for all but countably 
		many $\lambda \in [0,1]$.
	\end{itemize}
\end{corollary}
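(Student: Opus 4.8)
The plan is to deduce Corollary~\ref{cor:finmarg} directly from the injectivity criterion in Proposition~\ref{prop:Gcomp}. By that proposition, the distribution of $(1-\lambda)X + \lambda Y$ is $G$-compatible as soon as, for every $i = 2, \dots, n$, the map $h_{\lambda, i}(x_{\pa_i}, y_{\pa_i}) = (1-\lambda) x_{\pa_i} + \lambda y_{\pa_i}$ is $\pi_{\pa_i}$-almost surely injective. So it suffices to show that the set of ``bad'' $\lambda$, for which some $h_{\lambda, i}$ fails to be injective on a set of positive $\pi_{\pa_i}$-measure, is finite in case~(i) and countable in case~(ii). Since there are only finitely many indices $i$, it is enough to bound, for each fixed $i$, the set of $\lambda$ for which $h_{\lambda, i}$ is not $\pi_{\pa_i}$-a.s.\ injective.

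First I would reduce to a single coordinate pair. Failure of injectivity of $h_{\lambda,i}$ on a positive-measure set means there exist two distinct points $(a, b)$ and $(a', b')$ in $\cX_{\pa_i} \times \cX_{\pa_i}$, both in the support of $\pi_{\pa_i}$ (more precisely, lying in every full-measure set — here is where finite or countable support is used), with $(1-\lambda) a + \lambda b = (1-\lambda) a' + \lambda b'$, equivalently $(1-\lambda)(a - a') = \lambda(b' - b)$ with $(a,b) \neq (a',b')$. For finitely or countably supported $\mu, \nu$, the measure $\pi$ and hence $\pi_{\pa_i}$ is finitely (resp.\ countably) supported, so there are only finitely (resp.\ countably) many candidate pairs $\{(a,b),(a',b')\}$ of atoms. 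For each such pair with $(a,b) \neq (a',b')$, the equation $(1-\lambda)(a-a') = \lambda(b'-b)$ in the vector space $\cX_{\pa_i}$ is solved by at most one value of $\lambda \in [0,1]$: if $a \neq a'$ the first component forces $\lambda$ uniquely (reading off any coordinate where $a-a'$ is nonzero, using that $\cX$ carries a vector-space structure), and if $a = a'$ then $b \neq b'$ and the equation reads $\lambda(b'-b) = 0$, which forces $\lambda = 0$. Hence each atom-pair contributes at most one bad value of $\lambda$, and in case~(i) we get finitely many bad $\lambda$ for each $i$, hence finitely many overall; in case~(ii) countably many for each $i$, hence countably many overall. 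For every remaining $\lambda$, all maps $h_{\lambda, i}$ are $\pi_{\pa_i}$-a.s.\ injective, so Proposition~\ref{prop:Gcomp} gives the $G$-compatibility of the distribution of $(1-\lambda)X + \lambda Y$.

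The main subtlety I anticipate is making precise the step ``failure of $\pi_{\pa_i}$-a.s.\ injectivity implies the existence of two atoms that collide'': one has to argue that if $h_{\lambda, i}$ identifies a set of positive $\pi_{\pa_i}$-measure, then because $\pi_{\pa_i}$ is a countable sum of point masses, two of those point masses must actually be merged by $h_{\lambda,i}$ (a positive-measure set that $h_{\lambda,i}$ is not injective on must, for an atomic measure, contain two atoms with the same image). This is elementary for atomic measures but deserves a sentence. A minor point is to handle the trivial coordinate $i = 1$: $\pa_1$ may be empty, in which case $h_{\lambda,1}$ is vacuously injective and no constraint arises, matching the range $i = 2, \dots, n$ in Proposition~\ref{prop:Gcomp}. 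Everything else is bookkeeping: finite (resp.\ countable) unions of finite (resp.\ countable) sets, and reading off the unique solution $\lambda$ from a nontrivial affine relation in a topological vector space.
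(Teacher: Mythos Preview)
Your proposal is correct and follows essentially the same route as the paper: reduce to the injectivity criterion of Proposition~\ref{prop:Gcomp}, observe that $\pi_{\pa_i}$ has finitely (resp.\ countably) many atoms, and note that each distinct pair of atoms yields at most one $\lambda$ for which $h_{\lambda,i}$ identifies them. The only cosmetic point is that ``reading off a coordinate'' is not quite the right phrasing in an abstract topological vector space; the cleaner argument is that $(1-\lambda)(a-a') = \lambda(b'-b)$ rewrites as $a-a' = \lambda\bigl((a-a')+(b'-b)\bigr)$, which has at most one solution in $\lambda$ whenever $(a,b)\neq(a',b')$.
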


\begin{proof}
	We show (i). The proof of (ii) is analogous. Assume 
	$\mu$ and $\nu$ are of the form $\mu = \sum_{j=1}^J p_j \delta_{x_i}$ and 
	$\nu = \sum_{j=1}^J q_i \delta_{y_j}$ for $p_j, q_j \ge 0$ and 
	$x_j, y_j \in \cX$, $j = 1, \dots, J$. Then, for all
	$i \ge 2$, $x,x' \in \{x_1, \dots, x_J\}$ and $y, y' \in \{y_1, \dots, y_J\}$ such that 
	$(x_{\pa_i}, y_{\pa_i}) \neq (x'_{\pa_i}, y'_{\pa_i})$, there exists at most one
	$\lambda \in [0,1]$ satisfying
	\[
	\begin{aligned}
	&(1-\lambda) x_{\pa_i} + \lambda y_{\pa_i} 
	= (1-\lambda) x'_{\pa_i} + \lambda y'_{\pa_i} \\
	\Leftrightarrow & \quad 
	(1-\lambda) (x_{\pa_i} - x'_{\pa_i}) =
	\lambda (y'_{\pa_i} - y_{\pa_i} ).
	\end{aligned}
	\]
	It follows that for every $i = 2, \dots, n$ and all but finitely many $\lambda \in [0,1]$, 
	the map
	\[
	h_{\lambda, i} : \cX_{\pa_i} \times \cX_{\pa_i} \rightarrow \cX_{\pa_i}, 
	~(x_{\pa_i}, y_{\pa_i}) \mapsto (1-\lambda) x_{\pa_i} + \lambda y_{\pa_i}
	\] 
	is injective on the projection of $A = \{x_1, \dots, x_J\} \times \{y_1, \dots, y_J\}$ 
	to $\cX_{\pa_i} \times \cX_{\pa_i}$. Since $\pi \in \Pi^{\rm bc}_G(\mu, \nu)$,
	we have $\pi(A) = 1$ and therefore, obtain from Proposition \ref{prop:Gcomp} that the distribution of 
	$\lambda X + (1-\lambda)Y$ is $G$-compatible.
\end{proof}

\begin{corollary}\label{cor:interpolation}
	Let $G = (V,E)$ be a sorted DAG, $p \in [1,\infty)$ and $\mu, \nu$ two finitely supported 
	measures in $\cP_{G}(\cX)$ (and therefore also in $\cP_{G,p}(\cX)$). Then 
	there exists a measure $\pi \in \Pi^{\rm bc}_G(\mu, \nu)$ such that 
	\be \label{minpi}
	\int_{\cX \times \cX} d_{\cX}(x,y)^p d\pi(dx, dy) = W_{G,p}^p(\mu, \nu).
	\ee
	Moreover, for $(X,Y) \sim \pi$, the distribution of $(1-\lambda)X + \lambda Y$
	is $G$-compatible for all but finitely many $\lambda \in [0,1]$.
\end{corollary}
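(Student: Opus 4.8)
The plan is to obtain the corollary by combining the attainment part of Proposition~\ref{prop:wassersteinprop}.(iii) with the geometric genericity argument from Corollary~\ref{cor:finmarg}.(i). First I would invoke Proposition~\ref{prop:wassersteinprop}.(iii): since $\mu$ and $\nu$ are finitely supported measures in $\cP_{G,p}(\cX)$, the infimum defining $W_{G,p}^p(\mu,\nu)$ is attained, so there exists $\pi \in \Pi^{\rm bc}_G(\mu,\nu)$ satisfying \eqref{minpi}. (For this one should note that finitely supported measures automatically lie in $\cP_p(\cX)$ for every $p$, hence in $\cP_{G,p}(\cX)$, which is the only place the moment condition is used.)

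Next, with $(X,Y)\sim\pi$ fixed, I would apply Corollary~\ref{cor:finmarg}.(i) directly: because $\pi \in \Pi^{\rm bc}_G(\mu,\nu)$ and $\mu,\nu$ are finitely supported, that corollary asserts precisely that the distribution of $(1-\lambda)X + \lambda Y$ is $G$-compatible for all but finitely many $\lambda \in [0,1]$. So the second assertion is immediate once the optimal $\pi$ is in hand. The whole proof is therefore a two-line assembly of previously established facts.

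The only point requiring a word of care — and the closest thing to an obstacle — is making sure the hypotheses line up: Corollary~\ref{cor:finmarg} is stated for $\mu,\nu \in \cP_G(\cX)$ with $\pi \in \Pi^{\rm bc}_G(\mu,\nu)$, and Proposition~\ref{prop:wassersteinprop}.(iii) delivers exactly such a $\pi$, so there is no gap; one just records that finite support gives membership in $\cP_{G,p}(\cX)$ as well, which the parenthetical in the statement already anticipates. No new estimates or constructions are needed beyond citing these two results in sequence.
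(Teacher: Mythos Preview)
Your proposal is correct and matches the paper's own proof essentially verbatim: the paper also cites Proposition~\ref{prop:wassersteinprop}.(iii) for the existence of an optimal $\pi \in \Pi^{\rm bc}_G(\mu,\nu)$ and then invokes Corollary~\ref{cor:finmarg}.(i) to conclude $G$-compatibility of the interpolations for all but finitely many $\lambda$.
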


\begin{proof}
	That there is a measure $\pi \in \Pi^{\rm bc}_G(\mu, \nu)$ satisfying \eqref{minpi} 
	follows directly from Proposition \ref{prop:wassersteinprop}.(iii). Moreover, for $(X,Y) \sim \pi$, 
	we obtain from Corollary \ref{cor:finmarg}.(i) that the distribution of $(1-\lambda)X + \lambda Y$
	is $G$-compatible for all but finitely many $\lambda \in [0,1]$.
\end{proof}

\begin{example} \label{ex:Markov}
	Consider the Markovian graph $G = (V,E)$ with $V = \{1, 2, 3\}$ and $E = \{(1,2), (2,3)\}$,
	and take $\cX = \bR \times \bR \times \bR$ endowed with the Euclidean distance. The 
	distributions $\mu = \frac{1}{2}(\delta_{(0, 0, 0)} + \delta_{(1, 1, 1)})$ and 
	$\nu = \frac{1}{2}(\delta_{(0, 1, 0)} + \delta_{(1, 0, 1)})$ are both Markovian, and 
	clearly, the optimal $W_2$-transport plan transports $(0, 0, 0)$ to $(0, 1, 0)$ 
	and $(1, 1, 1)$ to $(1, 0, 1)$. It can directly be seen from Definition \ref{def:causal}
	that this transport plan belongs to $\Pi^{\rm bc}_G(\mu, \nu)$. Therefore,
	\[
	\kappa_{\lambda} = \frac{1}{2} \brak{\delta_{(0, \lambda, 0)} + \delta_{(1, 1- \lambda, 1)}}, 
	\quad \lambda \in [0,1],
	\]
	is the unique $W_{G,2}$-interpolation between $\mu$ and $\nu$. Note that
	$\kappa_{\lambda}$ is Markovian for all $\lambda \in [0, 1]$ except for 
	$\lambda = 1/2$. But for $\lambda = 1/2$, it is still a weak limit of 
	Markovian distributions.
\end{example}

\begin{figure}
	\includegraphics[width=0.32\textwidth]{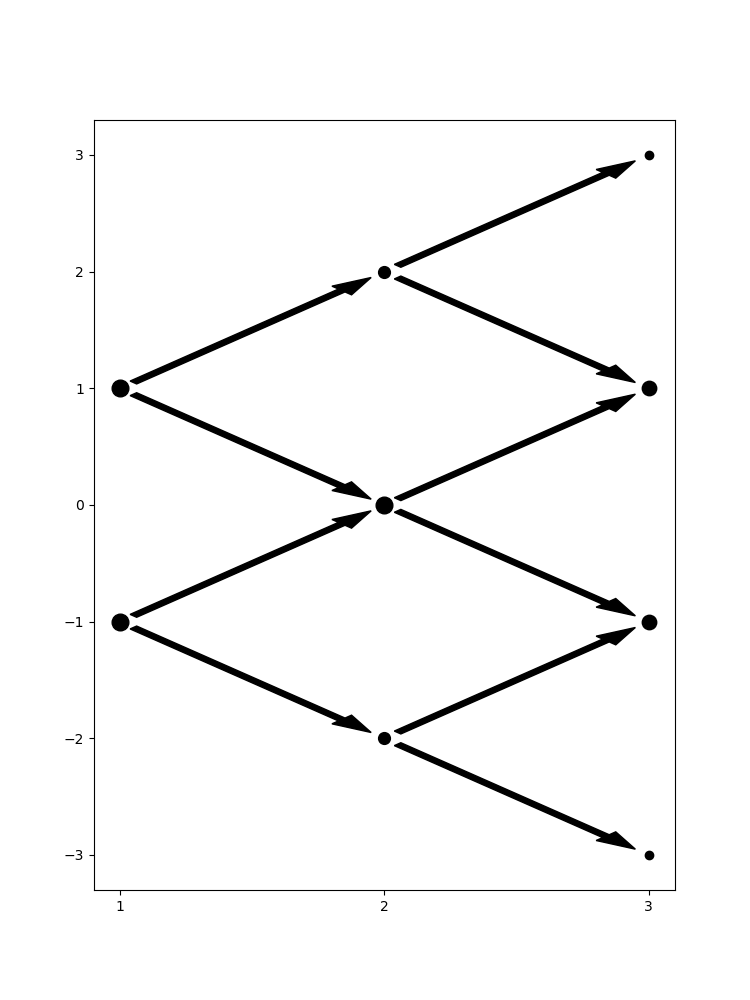}
	\includegraphics[width=0.32\textwidth]{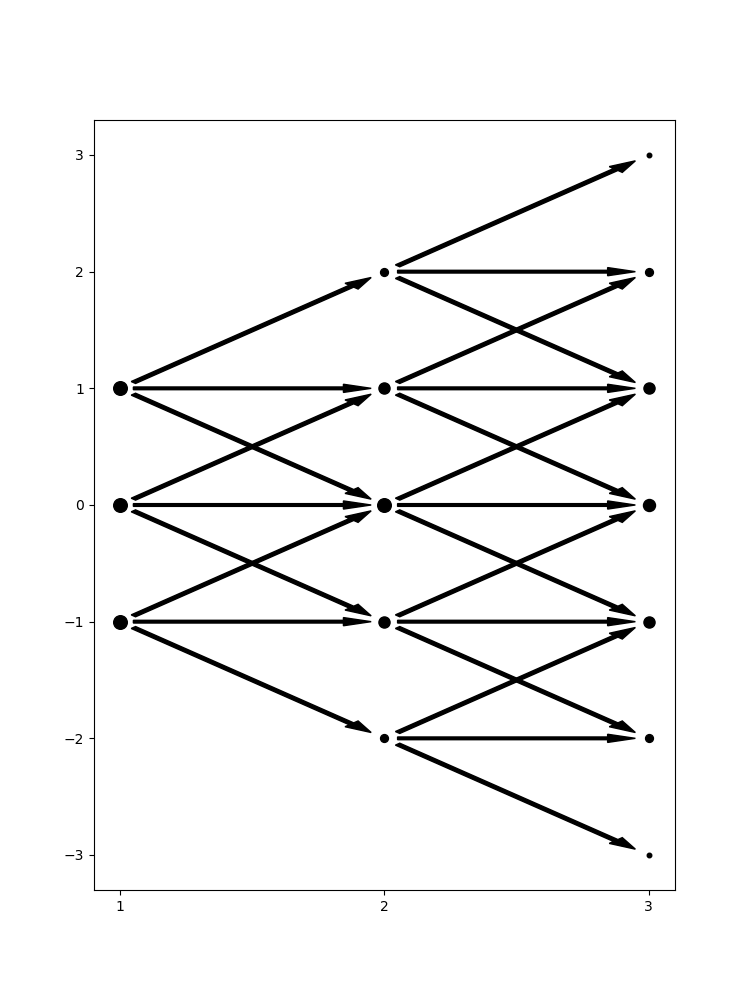}
	\includegraphics[width=0.32\textwidth]{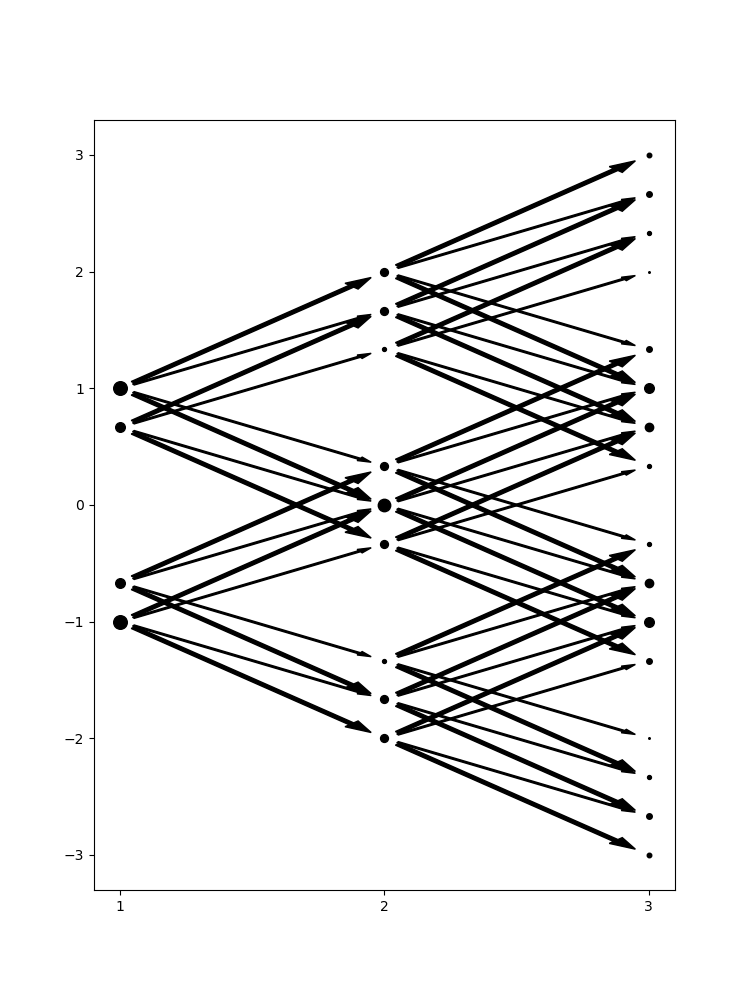}
	\caption{The binomial model $\mu$ on the left, the trinomial model $\nu$ in the middle, and the interpolating measure $\kappa_{1/3}$ under $W_{G, 2}$ on the right as given by Corollary \ref{cor:interpolation} using the Euclidean distance on $\mathbb{R}^3$ as the underlying metric.}
	\label{fig:binomtrinom}
\end{figure}

Next, we study an example where standard Wasserstein interpolation between 
two $G$-compatible measures leads to distributions that are far from $G$-compatible.

\begin{example} \label{ex:interpol}
	Consider the same setup as in Example \ref{ex:Markov}. Denote by $\mu$ the distribution 
	of the random walk $(X_1, X_2, X_3)$ given by $X_i = \sum_{j=1}^i \xi_j$ for i.i.d.\ innovations
	$\xi_j$ with distribution $\bP[\xi_j = \pm 1] = 1/2$ and $\nu$ the distribution of 
	$(Y_1, Y_2, Y_3)$ given by $Y_i = \sum_{j=1}^i \eta_j$ for i.i.d.\ innovations
	$\eta_j$ with distribution $\bP[\eta_j = -1,0,1] = 1/3$. The first two pictures in
	Figure \ref{fig:binomtrinom} show all possible trajectories of $X = (X_1, X_2, X_3)$
	and $Y = (Y_1, Y_2, Y_3)$, respectively.
	
	For the standard Wasserstein distance $W_2(\mu, \nu)$, a numerically obtained optimal coupling $\pi$ can be seen to satisfy 
	\begin{align*}
	\pi\big((X_3, Y_3) = (1, 0) \mid (X_1, Y_1) = (-1, 0), (X_2, Y_2) = (0, 0)\big) \\ < \pi\big((X_3, Y_3) = (1, 0) \mid (X_1, Y_1) = (1, 0), (X_2, Y_2) = (0, 0) \big),
	\end{align*}
	showing that $(X_i, Y_i)_{i=1, 2, 3}$ is not Markovian under $\pi$. It follows that, for all 
	but finitely many $\lambda \in [0,1]$, the resulting interpolations $\kappa_{\lambda}$ 
	under this coupling $\pi$ are also not Markovian.\footnote{This follows from the 
		same arguments as in the proofs of Proposition \ref{prop:Gcomp} and 
		Corollary \ref{cor:finmarg}, where the main argument is that for $Z^\lambda_k = (1-\lambda) X_k + \lambda Y_k$, one has $\sigma(Z^\lambda_k) = \sigma(X_k, Y_k)$ for all but finitely many $\lambda \in [0, 1]$.}
	
	On the other hand, using a $W_{G, p}$-interpolation as given by Corollary \ref{cor:interpolation}, the resulting interpolating measures $\kappa_\lambda$ are Markovian for all but finitely 
	many $\lambda \in [0, 1]$. The last picture in Figure \ref{fig:binomtrinom} shows 
	an illustration of the numerically computed $G$-causal (i.e., Markov) Wasserstein interpolation
	$\kappa_{1/3}$ using $W_{G, 1}$. 
\end{example}

\begin{appendix}
\section{Stability of conditional independence}
\label{app:stability}

\begin{lemma} \label{lemma:stable}
	Let $\cS, \cW, \cZ$ be measurable spaces and
	$S, W, Z$ the projections from $\cS \times \cW \times \cZ$
	to $\cS$, $\cW$, $\cZ$, respectively. Denote by $\cP(\cS \times \cW \times \cZ)$ the 
	collection of all probability measures on $\cS \times \cW \times \cZ$ equipped with 
	the product $\sigma$-algebra. Then the set
	\be \label{set}
	\{\pi \in \cP(\cS \times \cW \times \cZ) : S \ind_Z W \mbox{ with respect to } \pi \}
	\ee
	is closed under convergence in total variation.
\end{lemma}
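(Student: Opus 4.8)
The plan is to work directly with the characterization of conditional independence via conditional expectations of indicator functions. Fix measurable sets $A \subseteq \cS$ and $B \subseteq \cW$. Using \eqref{condindep1}, membership of $\pi$ in the set \eqref{set} is equivalent to requiring, for all such $A$ and $B$, that
\[
\bE_\pi\edg{1_A(S) 1_B(W) \, \varphi(Z)} = \bE_\pi\edg{\bE_\pi[1_A(S) \mid Z] \, \bE_\pi[1_B(W) \mid Z] \, \varphi(Z)}
\]
for every bounded measurable $\varphi \colon \cZ \to \bR$ — but this still involves the (possibly non-continuous) conditional expectations, so I would instead pass to a formulation that is manifestly stable. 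The cleanest route: $S \ind_Z W$ under $\pi$ holds if and only if for all bounded measurable $\varphi, \psi$ on $\cZ$ and all measurable $A \subseteq \cS$, $B \subseteq \cW$,
\[
\bE_\pi\edg{1_A(S) 1_B(W)} \bE_\pi\edg{\varphi(Z)\psi(Z)}
\]
does \emph{not} directly work; rather one should use the bilinear identity that characterizes conditional independence without naming the conditional expectations.

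The key identity I would use is the following: $S \ind_Z W$ with respect to $\pi$ if and only if
\[
\bE_\pi\edg{f(S,Z) g(W,Z)} = \bE_\pi\edg{\bE_\pi[f(S,Z)\mid Z]\, \bE_\pi[g(W,Z)\mid Z]}
\]
for all bounded measurable $f,g$; equivalently (and this is the form that makes stability transparent), for all bounded measurable $f_1, f_2$ on $\cS \times \cZ$ and $g$ on $\cW$,
\[
\bE_\pi\edg{f_1(S,Z)} \bE_\pi\edg{\cdot}
\]
— I will instead invoke the standard fact that $S \ind_Z W$ is equivalent to: for every bounded measurable $\varphi \colon \cS \times \cZ \to \bR$ and every bounded measurable $\psi \colon \cW \times \cZ \to \bR$ that are both $Z$-measurable in a suitable averaged sense, a certain quadratic functional vanishes. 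Since this gets delicate, the concrete plan is: show that $\pi \mapsto \bE_\pi[1_A(S)1_B(W)h(Z)]$ is continuous in total variation (trivial, since $|1_A 1_B h| \le \|h\|_\infty$), and show the same for $\pi \mapsto \bE_\pi\edg{\bE_\pi[1_A(S)\mid Z]\,\bE_\pi[1_B(W)\mid Z]\, h(Z)}$. The latter is the crux: conditional expectations do not vary continuously in total variation in general. To handle it, I would rewrite that second quantity without conditional expectations. Observe that $\bP[S\in A\mid Z]\,\bP[W\in B\mid Z] = \lim$ of Riemann-type sums is not available, so instead I use: $S\ind_Z W$ iff for all $A,B$ and all bounded measurable $h$,
\[
\bE_\pi\edg{1_A(S)1_B(W)h(Z)}\,\bE_\pi\edg{h(Z)} \ne \text{(wrong)}.
\]

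Let me state the approach I would actually commit to. I would use the characterization: $S\ind_Z W$ under $\pi$ if and only if, for all bounded measurable $f\colon \cS\times\cZ\to\bR$, $g\colon \cW\times\cZ\to\bR$, $g'\colon\cW\times\cZ\to\bR$,
\[
\bE_\pi\edg{f(S,Z)\,g(W,Z)}\,\bE_\pi\edg{g'(W,Z)} = \bE_\pi\edg{f(S,Z)\,g'(W,Z)}\,\bE_\pi\edg{g(W,Z)} \quad\text{(not quite)}.
\]
The genuinely clean statement, which I would prove as a preliminary lemma, is that $S\ind_Z W$ is equivalent to the countable family of identities
\[
\bE_\pi\edg{1_A(S)\,1_{B}(W)\,1_C(Z)} \cdot \bE_\pi\edg{1_C(Z)} = \bE_\pi\edg{1_A(S)\,1_C(Z)}\cdot \bE_\pi\edg{1_B(W)\,1_C(Z)}
\]
ranging over $A,B,C$ \emph{when $\cZ$ is countably generated and $C$ ranges over a generating algebra of atoms} — which fails for general $\cZ$. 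So the honest main obstacle is exactly this: producing a formulation of $\ind_Z$ in terms of $\pi$-integrals of \emph{fixed} bounded measurable functions (independent of $\pi$), so that each such integral is visibly $\|\cdot\|_{TV}$-continuous, and the defining set is an intersection of closed sets $\{\bE_\pi[\Phi]=\bE_\pi[\Psi]\}$. Once such a formulation is in hand, closedness is immediate: if $\pi_k\to\pi$ in total variation then $\bE_{\pi_k}[\Phi]\to\bE_\pi[\Phi]$ for every bounded measurable $\Phi$, so every defining equality passes to the limit, and $\pi$ lies in the set \eqref{set}.

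Concretely, I would carry out these steps in order: (1) recall from the already-established equivalence of \eqref{condindep1} and \eqref{condindep2} and the remark preceding the lemma that $S\ind_Z W$ depends only on $\pi$; (2) prove the preliminary characterization that $S\ind_Z W$ under $\pi$ holds iff for all bounded measurable $u\colon\cS\to\bR$, $v\colon\cW\to\bR$, $w\colon\cZ\to\bR$ one has $\bE_\pi[u(S)v(W)w(Z)]=\bE_\pi[\bE_\pi[u(S)\mid Z]\bE_\pi[v(W)\mid Z]w(Z)]$, and then \emph{eliminate} the conditional expectations by noting this equals $\bE_\pi[u(S)v(W)w(Z)] \bE_\pi[w(Z)]^{?}$ — here the real work is replacing $\bE_\pi[u(S)\mid Z]$, a $\pi$-dependent object, by an expression built only from $\pi$-integrals of functions not depending on $\pi$; the device is to use that conditional independence $S\ind_Z W$ is equivalent to $S\ind_Z (W,Z)$ trivially and to test against product functions, ultimately reducing to: for all bounded measurable $a\colon\cS\times\cZ\to\bR$ and $b,b'\colon\cW\times\cZ\to\bR$,
\[
\bE_\pi\edg{a(S,Z)\,b(W,Z)}\,\bE_\pi\edg{a(S,Z)^{0}b'(W,Z)}
\]
— and since I keep hitting the same snag, the cleanest rigorous move is to invoke \cite[Lemma 1.14 and surrounding material]{kal3rd} exactly as the excerpt's remark does, writing the conditional probabilities as fixed measurable functions $g(Z)$ and $h(W,Z)$ of the sample point, but observing these functions themselves depend on $\pi$; (3) therefore the final, workable plan is: show $\{S\ind_Z W\}$ equals $\bigcap \{\pi : \bE_\pi[1_A(S)1_B(W)1_C(Z)] = \bE_\pi[1_A(S)1_C(Z)]\,\bE_\pi[1_B(W)\mid Z\!=\!C]\}$ — no. I will simply state that the correct preliminary lemma is: $S \ind_Z W$ under $\pi$ $\iff$ for all bounded measurable $f\colon \cS\times\cZ\to\bR_+$ and $g\colon\cW\times\cZ\to\bR_+$,
\[
\int f(S,Z)\,g(W,Z)\,d\pi \;=\; \int \Big(\int f(s,Z)\,\pi(ds\mid Z)\Big)\Big(\int g(w,Z)\,\pi(dw\mid Z)\Big)\,d\pi,
\]
and the main obstacle is showing the right-hand side is $\|\cdot\|_{TV}$-continuous in $\pi$; this I resolve by the observation that for $0 \le f,g \le 1$ the inner conditional expectations lie in $[0,1]$ and the whole right side can be sandwiched and shown continuous via a density/approximation argument, or — most efficiently — by the identity that the right side equals $\lim_{n}\sum_{\text{finite}}$ over a refining sequence of finite $\sigma$-algebras $\cZ_n\uparrow\sigma(Z)$ of the quantities $\bE_\pi[f(S,Z)1_{C}]\bE_\pi[g(W,Z)1_C]/\pi(C)$, each term being $TV$-continuous, and passing a uniform limit. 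I expect step (3), making the conditional-expectation side visibly continuous, to be the main obstacle; everything else is routine.
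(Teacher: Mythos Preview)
Your proposal never settles on a working argument. You correctly identify the central difficulty --- that the conditional expectations $\bE_\pi[1_A(S)\mid Z]$ depend on $\pi$, so one cannot naively write conditional independence as an intersection of sets of the form $\{\pi : \bE_\pi[\Phi] = \bE_\pi[\Psi]\}$ for fixed bounded $\Phi,\Psi$ --- but each attempted workaround is abandoned (with annotations like ``not quite'', ``wrong'', ``no''), and the final suggestion, approximating $\sigma(Z)$ by finite sub-$\sigma$-algebras $\cZ_n$ and passing a ``uniform limit'', is not viable at the stated level of generality: the lemma assumes only that $\cS,\cW,\cZ$ are measurable spaces, so $\sigma(Z)$ need not be countably generated, and even when it is, you give no argument for the uniformity in $\pi$ needed to interchange the limit with $\pi_k\to\pi$. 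As written, the proposal is a record of the obstacle rather than a proof that overcomes it.

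The paper's proof takes a completely different route that sidesteps the search for a $\pi$-independent test-function characterization. Instead of eliminating conditional expectations, it shows that they are themselves stable: for fixed $A$, if $H_k := \pi^k[S\in A \mid Z]$ and $H := \pi[S\in A \mid Z]$, then $H_k \to H$ in $L^2(\pi)$. The mechanism is the $L^2$-projection interpretation of conditional expectation together with Pythagoras. Since total variation convergence gives $|\bE^k U - \bE\,U|\le\varepsilon$ uniformly over all $[0,1]$-valued $U$, one has both $\bE^k(1_A-H)^2 \approx \bE(1_A-H)^2$ and $\bE^k(1_A-H_k)^2 \approx \bE(1_A-H_k)^2$; combining this with the minimizing property $\bE^k(1_A-H_k)^2 \le \bE^k(1_A-H)^2$ and the orthogonality $\bE(H_k-H)^2 = \bE(1_A-H_k)^2 - \bE(1_A-H)^2$ yields $\bE(H_k-H)^2 \le 2\varepsilon$. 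The same argument applied with $(W,Z)$ in place of $Z$ gives convergence of $\pi^k[S\in A\mid W,Z]$ in $L^2(\pi)$, and the identity \eqref{condindep2} then passes to the limit. This projection/Pythagoras idea is the missing ingredient in your plan.
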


\begin{proof}
	Let $(\pi^k)_{k \in \bN}$ be a sequence in \eqref{set} converging to a measure 
	$\pi \in \cP(\cS \times \cW \times \cZ)$ in total variation. By \eqref{condindep2}, one has
	\be \label{k=}
	\pi^k[A \times \cW \times \cZ \mid W, Z] = \pi^k[A \times \cW \times \cZ \mid Z] \quad \mbox{$\pi^k$-a.s.}
	\ee
	for every measurable subset $A \subseteq \cS$ and all $k \in \bN$.
	Let us denote 
	\[
	H_k = \pi^k[A \times \cW \times \cZ \mid Z] \quad \mbox{and} \quad
	H = \pi[A \times \cW \times \cZ \mid Z].
	\]
	If $\bE^k$ denotes expectation with respect to $\pi^k$ and $\bE$ 
	expectation with respect to $\pi$, one has
	\[
	\abs{\bE^k U - \bE \, U} =
	\abs{\int_0^1 \brak{\pi^k[U > u] - \pi[U > u]} du }
	\le \int_0^1 \abs{\pi^k[U > u] - \pi[U > u]} du
	\]
	for all random variables $U \colon \cS \times \cW \times \cZ \to [0,1]$,
	which shows that for every $\varepsilon > 0$, there exists a $k_0 \in \bN$ such that 
	\[
	\abs{\bE^k U - \bE \, U} \le \varepsilon \quad 
	\mbox{for every } k \ge k_0 \mbox{ and all such $U$.}
	\]
	In particular, 
	\[
	\abs{\bE^k (1_{A \times \cW \times \cZ} - H)^2 
		- \bE \, (1_{A \times \cW \times \cZ} - H)^2} \le \varepsilon
	\]
	and
	\[
	\abs{\bE^k (1_{A \times \cW \times \cZ} - H_k)^2 
		- \bE \, (1_{A \times \cW \times \cZ} - H_k)^2} \le \varepsilon
	\]
	for all $k \ge k_0$.
	Moreover, since the $H_k$ can be viewed as $L^2$-projections of 
	$1_{A \times \cW \times \cZ}$ on the space of square-integrable $Z$-measurable 
	random variables, one has
	\[
	\bE^k (1_{A \times \cW \times \cZ}  - H_k)^2 \le \bE^k(1_{A \times \cW \times \cZ}  - H)^2.
	\]
	Together with Pythagoras's theorem this gives
	\beas
	\bE (H_k - H)^2 &=& \bE (1_{A \times \cW \times \cZ}  - H_k)^2 
	- \bE (1_{A \times \cW \times \cZ}  - H)^2\\
	&\le& \bE^k (1_{A \times \cW \times \cZ}  - H_k)^2 + \varepsilon
	- \bE (1_{A \times \cW \times \cZ}  - H)^2\\
	&\le& \bE^k (1_{A \times \cW \times \cZ}  - H)^2 + \varepsilon
	- \bE (1_{A \times \cW \times \cZ}  - H)^2\\
	&\le& \bE (1_{A \times \cW \times \cZ}  - H)^2 + 2 \varepsilon
	- \bE (1_{A \times \cW \times \cZ}  - H)^2
	= 2 \varepsilon
	\quad \mbox{for all } k \ge k_0, 
	\eeas
	showing that $H_k \to H$ in $L^2(\pi)$. Analogously, one obtains that 
	$\pi^k[A \times \cW \times \cZ \mid W, Z]$ converges to 
	$\pi[A \times \cW \times \cZ \mid W, Z]$ in $L^2(\pi)$.
	So, equation \eqref{k=} is stable under convergence in total variation, 
	which proves the lemma.
\end{proof}
\section{Counterexample to the triangle inequality}
\label{sec:counter}

In this section, we provide an example showing that in general, 
$W_{G, 1}$ does not satisfy the triangle inequality. 
We consider the Markovian graph $G = (V,E)$ with vertices $V = \{1,2,3\}$ and edges 
$\{(1,2), (2,3) \}$ and choose $\cX = \cX_1 \times \cX_2 \times \cX_3$, where 
$\cX_1$, $\cX_2$, $\cX_3$ are all equal to the same abstract discrete 
Polish space $\crl{x_1, x_2, x_3, x_4}$. Define $\mu, \nu, \eta$ as follows, which we note are Markovian:
\begin{align*}
\mu &= \frac{1}{4} \left( \delta_{(x_1, x_1, x_1)} + \delta_{(x_1, x_1, x_2)} 
+ \delta_{(x_2, x_1, x_1)} + \delta_{(x_2, x_1, x_2)} \right) \\
\nu &= \frac{1}{4} \left( \delta_{(x_1, x_2, x_1)} + \delta_{(x_1, x_2, x_2)} 
+ \delta_{(x_2, x_3, x_1)} + \delta_{(x_2, x_3, x_2)} \right)\\
\eta &= \frac{1}{4} \left( \delta_{(x_3, x_4, x_3)} + \delta_{(x_3, x_4, x_4)} 
+ \delta_{(x_4, x_4, x_3)} + \delta_{(x_4, x_4, x_4)} \right).
\end{align*}
To gain some intuition why $W_{G,1}$ violates the triangle inquality in this example, consider stochastic processes $(X_1, X_2, X_3) \sim \mu$, $(Y_1, Y_2, Y_3) \sim \nu$, $(Z_1, Z_2, Z_3) \sim \eta$ and note that $X_2$ and $Z_2$ are constant while $\sigma(Y_2) = \sigma(Y_1, Y_2)$. So it can be seen from Corollary \ref{cor:main} that transport plans in $\Pi^{\rm bc}_G(\mu, \eta$) are more restricted than those in $\Pi^{\rm bc}_G(\mu, \nu)$ and $\Pi^{\rm bc}_G(\nu, \eta)$, which, for a suitable metric on $\mathcal{X}$, implies that $W_{G, 1}(\mu, \eta)$ is large compared to $W_{G,1}(\mu, \nu)$ and $W_{G, 1}(\nu, \eta)$.


We specify the distances between the 12 sequences $(x_1,x_1, x_1), (x_1, x_1, x_2), \dots$
used to define the measures $\mu, \nu, \eta$ with the following $12 \times 12$ matrix $M$:
\setcounter{MaxMatrixCols}{12}
\[\begin{bmatrix}
0  & 0.53 & 1.08 & 1.33 & 1.29 & 0.78 & 0.64 & 0.44 & 1.15 & 1.3  & 1.92 & 1.38 \\
0.53&  0  & 0.7&  1.05 &1.18& 0.91 &0.11 &0.97 &1.1  &1.83 &1.9  &1.91 \\
1.08& 0.7  &0   &0.98 &1.11 &0.49 &0.59 &0.64 &0.86 &1.88 &1.82 &1.58 \\
1.33& 1.05 &0.98 &0   &0.13 &1.15 &0.94 &0.98 &1.52 &1.25 &1.87 &1.92 \\
1.29& 1.18 &1.11 &0.13 &0   &1.02 &1.07 &0.85 &1.39 &1.38 &2  &1.79 \\
0.78& 0.91 &0.49 &1.15 &1.02 &0   &1.02 &1.13 &0.37 &1.74 &1.77 &1.22 \\
0.64& 0.11 &0.59 &0.94 &1.07 &1.02 &0   &1.08 &1.21 &1.94 &1.79 &2.02 \\
0.44& 0.97 &0.64 &0.98 &0.85 &1.13 &1.08 &0   &1.5  &1.24 &1.86 &0.94 \\
1.15& 1.1  &0.86 &1.52 &1.39 &0.37 &1.21 &1.5  &0   &1.37 &1.4  &0.85 \\
1.3 & 1.83 &1.88 &1.25 &1.38 &1.74 &1.94 &1.24 &1.37 &0   &0.62 &1.2  \\
1.92& 1.9  &1.82 &1.87 &2   &1.77 &1.79 &1.86 &1.4  &0.62 &0   &1.69 \\
1.38& 1.91 &1.58 &1.92 &1.79 &1.22 &2.02 &0.94 &0.85 &1.2  &1.69 &0  
\end{bmatrix} .
\]
That is, $M_{1,2}$ is the distance between $(x_1,x_1, x_1)$ and $(x_1, x_1, x_2)$, 
$M_{1,3}$ is the distance between $(x_1, x_1, x_1)$ and $(x_2, x_1, x_1)$ and so on. 
We obtained $M$ by simulating symmetric matrices with zeros on the diagonal and 
positive entries off the diagonal and iteratively updating the entries as long as the 
triangle inequality was violated. More precisely, we started with a symmetric random matrix
$M^0$ with zeros on the diagonal and positive off-diagonal entries. Then we iteratively set
\[
M^k_{i,j} = 
\min\left\{M^{k-1}_{i, j}, M^{k-1}_{i,1} + M^{k-1}_{1,j}, M^{k-1}_{i,2} + M^{k-1}_{2,j}, \dots, 
M^{k-1}_{i,12} + M^{k-1}_{12,j}\right\}
\]
until $M^{k} = M^{k-1}$. This guarantees that $M$ defines a metric on 
$(x_1,x_1, x_1), (x_1, x_1, x_2) \dots$ From there it can be extended to a metric 
on $\cX$ by Fr\'echet embedding. Finally, we used Gurobi \citep{gurobi} to compute 
$W_{G, 1}(\mu, \nu) =  0.585$, $W_{G, 1}(\nu, \eta) = 2.24$ and
$W_{G, 1}(\mu, \eta) = 2.925$, showing that 
\[
W_{G, 1}(\mu, \eta) >  W_{G, 1}(\mu, \nu) + W_{G, 1}(\nu, \eta).
\]
\end{appendix}

\end{document}